\theoremstyle{plain}
\newtheorem{theorem}{Theorem}[section]
\newtheorem{lemma}{Lemma}[section]
\newtheorem{claim}{Claim}[section]
\theoremstyle{definition}
\theoremstyle{definition}
\newtheorem*{definition}{Definition}
\newtheorem*{remark}{Remark}
\definecolor{myred}{RGB}{226,56,18}
\definecolor{myorange}{RGB}{228,139,0}
\definecolor{mygreen}{RGB}{4,215,17}
\definecolor{mygrey}{RGB}{180,180,180}
\def\Cx{\mathbb{C}}
\def\Chat{\widehat{\mathbb{C}}}
\def\ind{\mathrm{ind}}
\def\dist{\mathrm{dist}}
\begin{document}

\title{\textbf{\textsc{Multiply connected wandering domains of meromorphic functions: the pursuit of uniform internal dynamics}}}

\author{Gustavo R.~Ferreira\thanks{Email: \texttt{gustavo.r.f.95@gmail.com}}\\
  \small{School of Mathematics and Statistics, The Open University}\\
  \small{Milton Keynes, MK7 6AA, UK}
  %% examples of more authors
%   \And
% Philip J.~Rippon \\
%  School of Mathematics and Statistics\\
%  The Open University\\
%  Milton Keynes, UK \\
%  \texttt{phil.rippon@open.ac.uk} \\
%  \And
%  Gwyneth M.~Stallard\\
%  School of Mathematics and Statistics\\
%  The Open University\\
%  Milton Keynes, UK\\
%  \texttt{gwyneth.stallard@open.ac.uk}
}

\maketitle

\begin{abstract}
Recently, Benini \textit{et al.} showed that, in simply connected wandering domains of entire functions, all pairs of orbits behave in the same way relative to the hyperbolic metric, thus giving us our first insight into the general internal dynamics of such domains. After proving in a recent manuscript that the same is not true for multiply connected wandering domains, a natural question is: how inhomogeneous can multiply connected wandering domains be? We give an answer to this question, in that we show that uniform dynamics inside an open subset of the domain generalises to the whole wandering domain. As an application of  this result, we construct the first example of a meromorphic function with a semi-contracting infinitely connected wandering domain.
\end{abstract}

% keywords can be removed
%\keywords{Holomorphic dynamics \and Meromorphic functions}

\section{Introduction}
Let $f:\Cx\to\Chat$ be a meromorphic function, where $\Chat := \Cx\cup\{\infty\}$ denotes the Riemann sphere. The study of its iterates, undertaken first by Fatou and Julia in the 1920s, comprises the area of \textit{complex dynamics} -- a field of research that has been increasingly active since the latter half of the XX\textsuperscript{th} century. The \textit{Fatou set} of $f$, defined as
\[ F(f) := \{z\in \Cx : \text{$(f^n)_{n\in\mathbb{N}}$ is defined and normal in a neighbourhood of $z$}\}, \]
is known to be the set of ``regular'' dynamics, and its connected components, called \textit{Fatou components}, are mapped into one another by $f$. \textcolor{black}{So, if $U\subset F(f)$ is a Fatou component of $f$, $f^n(U)$ is, for all $n\in\mathbb{N}$, contained in some Fatou component $U_n$ of $f$. This} separates Fatou components into two kinds \textcolor{black}{-- those for which there exist $n > m \geq 0$ such that $U_n = U_m$, called \textit{(pre)periodic} components, and those for which all $U_n$ are distinct, called \textit{wandering domains}.}

The internal dynamics of preperiodic components have been studied for a century, going back to Fatou and Cremer \textcolor{black}{(see, for instance, \cite{Ber93})}. It is well-known, for instance, that a periodic Fatou component falls into one of five dynamically distinct types, each one having a distinct topological model. Studying the internal dynamics of wandering domains, on the other hand, is a far more recent undertaking. The first steps were taken by Bergweiler, Rippon, and Stallard in 2013 \cite{BRS13}, who described the behaviour of multiply connected wandering domains of entire functions (recall that the \textit{connectivity} of a domain $\Omega\subset \Chat$ is its number of complementary components). Their methods made extensive use of the geometric properties of multiply connected wandering domains of entire functions, but more recent work examined the internal dynamics of wandering domains in terms of the hyperbolic metric -- which is always an available tool when talking about Fatou components -- for both simply \cite{BEFRS19} and multiply connected \cite{Fer21} wandering domains.

This is the point of view that we adopt in this work as well. \textcolor{black}{Denoting the hyperbolic metric in the hyperbolic domain $\Omega\subset\Chat$ by $d_\Omega$ (see Section \ref{sec:prelim} for the relevant definitions),} the starting point is the following question: given a meromorphic function $f$ with a wandering domain $U$ and points $z, w\in U$, what happens to
\[ \text{$d_{U_n}\left(f^n(z), f^n(w)\right)$ as $n\to+\infty$?} \]

A central result of \cite{BEFRS19} is that, if $U$ and all its iterates are simply connected, then the answer is -- qualitatively -- independent of our particular choice of $z$ and $w$; in other words, all pairs of orbits behave in the same way. On the other hand, it was shown in \cite{Fer21} that if $U$ is multiply connected the answer may depend on the chosen pair of points -- and, in particular, that all possible long-term behaviours can co-exist in the same domain.

An immediate question, then, is: how complicated can this co-existence be? The observed cases in \cite{Fer21} are all ``well-behaved'', in the sense that there are dynamically defined, smooth laminations of $U$ that determine how the iterates of each pair of points behave. In particular, every non-empty open subset of $U$ contains distinct pairs of points exhibiting all the behaviours present in $U$. Here, we show that (in some sense) this uniformity is a general feature of wandering domains of meromorphic functions.

\begin{theorem} \label{thm:mix}
Let $U$ be a wandering domain of the meromorphic function $f$. Suppose that there exist a point $z_0\in U$ and a neighbourhood $V\subset U$ of $z_0$ such that one of the following properties holds for every $w\in V$:
\begin{enumerate}[(a)]
    \item $d_{U_n}\left(f^n(w), f^n(z_0)\right)\to 0$ (we say that $V$ is \emph{contracting relative to $z_0$});
    \item $d_{U_n}\left(f^n(w), f^n(z_0)\right)$ decreases to a limit $c(z_0, w) > 0$ without ever reaching it, except for a discrete (in $V$) set of points for which $f^k(w) = f^k(z_0)$ for some $k\in \mathbb{N}$ (we say that $V$ is \emph{semi-contracting relative to $z_0$});
    \item there exists $N\in\mathbb{N}$ (uniform over compact subsets of $V$) such that $d_{U_n}\left(f^n(w)\textcolor{black}{,} f^n(z_0)\right) = c(z_0, w) > 0$ for $n\geq N$ (we say that $V$ is \emph{locally eventually isometric relative to $z_0$}).
\end{enumerate}
Then, the same property holds for every $w\in U$.
\end{theorem}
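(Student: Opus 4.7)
I would argue by connectedness of $U$: for each option $(\alpha)\in\{a,b,c\}$, set $W_\alpha\subseteq U$ to be the collection of $w$ satisfying the corresponding condition relative to $z_0$; by hypothesis $V\subseteq W_\alpha$, and the goal reduces to showing that $W_\alpha$ is simultaneously open and closed in $U$. The basic object throughout is $\phi_n(w):=d_{U_n}(f^n(z_0),f^n(w))$. Schwarz--Pick applied to $f:U_n\to U_{n+1}$ makes $\phi_n$ non-increasing in $n$, and the triangle inequality combined with $d_{U_n}(f^n(w),f^n(w'))\leq d_U(w,w')$ shows that each $\phi_n$ is $1$-Lipschitz in $w$ with respect to $d_U$; consequently the pointwise limit $\phi_\infty$ is also $1$-Lipschitz, and by Dini's theorem the monotone pointwise convergence of $\phi_n$ on $V$ is in fact uniform on compact subsets.

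Case (c) admits the cleanest structural reduction. Since $\phi_n\equiv\phi_{n+1}$ on $V$ for every $n\geq N$, comparing the first-order expansions of both sides at $z_0$ forces the hyperbolic derivative of $f:U_n\to U_{n+1}$ at $f^n(z_0)$ to equal $1$; the maximum-principle form of Schwarz--Pick (applied to the subharmonic function $\log\|Df\|_{\mathrm{hyp}}$) then upgrades this to the statement that $f:U_n\to U_{n+1}$ is a holomorphic covering for all $n\geq N$. Lifting iteration to the universal covers (each of which is the unit disc $\mathbb{D}$) turns each lifted $f$ into a M\"obius isometry, and eventual isometry on $V$ propagates to all of $U$ via a tracking argument on the associated Fuchsian deck groups.

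For cases (a) and (b) I would couple the uniform convergence on $V$ with a normal-families argument on the Fatou component $U$. Extract a locally uniformly convergent subsequence $f^{n_k}\to g$ on $U$; for (a), the hyperbolic collapse of iterates on $V$ should force $g$ to be constant on $V$ and hence, by the identity theorem, on $U$, from which one would recover the contracting behaviour for every $w\in U$; for (b), a parallel analytic-rigidity argument exploits the fact that $\phi_n-\phi_\infty$ stays strictly positive on $V$. Closedness of $W_\alpha$ is by contrast comparatively routine: $W_a=\phi_\infty^{-1}(\{0\})$ is closed by continuity of $\phi_\infty$, $W_c$ becomes a global condition on $f$ once the covering property is in hand, and $W_b$ is closed by combining monotonicity with upper semi-continuity of the convergence.

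The main obstacle, in my view, lies in cases (a) and (b): translating between hyperbolic convergence in the moving domains $U_n$ and spherical convergence on $\Chat$. In a wandering domain the hyperbolic density at $f^n(z_0)$ can degenerate rapidly, so $\phi_{n_k}(w)\to 0$ need not force $|f^{n_k}(w)-f^{n_k}(z_0)|\to 0$, and the converse fails equally routinely; any clean normal-families argument must circumvent this mismatch. Overcoming it will likely require the more refined hyperbolic-geometric tools developed in \cite{BRS13} and \cite{Fer21}, or a direct argument that bypasses the Euclidean detour altogether by working intrinsically on the universal covers throughout.
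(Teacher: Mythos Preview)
Your open--closed framework and your identification of the covering-map structure in case~(c) are both sound starting points, and your observation that $\phi_\infty$ is $1$-Lipschitz (hence continuous) matches the paper's Lemma~\ref{lem:conv}. However, the actual arguments you sketch for all three cases have genuine gaps.

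\textbf{Case (a).} You correctly diagnose the obstacle --- hyperbolic collapse in $U_n$ need not imply spherical collapse --- but the normal-families route through $f^{n_k}\to g$ does not recover from it: in a wandering domain the limit $g$ is typically constant anyway, so the identity theorem gives no leverage on \emph{hyperbolic} distances. The paper bypasses the Euclidean detour entirely, much as you suggest at the end, by lifting to the universal covers and showing that $\|Df^n(z_0)\|_U^{U_n}\to 0$. The missing idea is a \emph{hyperbolic Landau/Bloch theorem} (Lemma~\ref{lem:landau}): if the hyperbolic derivative stayed bounded below, $f^n(B_U(z_0,r))$ would contain a hyperbolic ball of fixed radius, contradicting the shrinking diameter on $V$. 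Once $\|Df^n(z_0)\|\to 0$, the lifted non-autonomous system on $\mathbb{D}$ is contracting by the Benini \emph{et al.}\ trichotomy, and this pushes down to all of $U$. Your open--closed scheme never gets off the ground here because you have no mechanism to show $W_a$ is \emph{open}.

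\textbf{Case (c).} Lifting to $\mathbb{D}$ does give M\"obius isometries $g_n$, so $d_{\mathbb{D}}(G_n(\tilde w),0)$ is constant in $n$; but $\phi_n(w)=\min_{\gamma\in\Gamma_n} d_{\mathbb{D}}(\gamma G_n(\tilde w),0)$ depends on the deck group $\Gamma_n$, and these groups vary with $n$ in an \emph{a priori} uncontrolled way. ``Tracking the Fuchsian deck groups'' is exactly the difficulty, not its resolution. The paper's fix is to invoke \emph{Gromov/Chabauty convergence} of the marked surfaces $(U_n,f^n(z_0))$ (the injectivity radius is bounded below thanks to the isometric behaviour on $V$), obtain a limit surface $(U^*,z^*)$ with near-isometries $\phi_k$, and then show that $\phi_k^{-1}\circ f^{n_k}\circ\varphi_0$ form a normal family of quasiregular maps with dilatation tending to~$1$, hence with an \emph{analytic} limit. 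A hypothetical semi-contracting point $w$ would produce infinitely many distinct preimages accumulating in a compact set, contradicting discreteness of fibres. None of this machinery is hinted at in your sketch.

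\textbf{Case (b).} The paper does not use analytic rigidity or normal families here. It first excludes contracting points via the $f$-invariance and continuity of $\phi_\infty$ (the orbit of a contracting point would enter $f^n(V)$ and inherit a positive value of $\phi_\infty$). To exclude eventually isometric points it observes, as you do, that such a point forces $f:U_n\to U_{n+1}$ to be a covering for large $n$; then, for $w\in V$ semi-contracting, it builds a sequence $w_n\in U$ with $f^n(w_n)=f^n(w)$ and $d_U(w_n,z_0)=\phi_n(w)$, extracts an accumulation point $w^*\in V$, and shows $d_U(w^*,z_0)=\phi_\infty(w^*)$ --- i.e.\ $w^*$ is eventually isometric inside $V$, a contradiction. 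Your ``parallel analytic-rigidity argument'' is not specific enough to be evaluated, and in any event the paper's argument is elementary and purely metric.
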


\begin{remark}
\textcolor{black}{Benini \textit{et al.}'s results \cite[Theorem A]{BEFRS19} for simply connected wandering domains generalise to meromorphic functions as long as all iterates of the wandering domains are simply connected, and so the conclusion of Theorem \ref{thm:mix} always holds for such domains.} Theorem \ref{thm:mix} is also vacuous for multiply connected wandering domains with finite eventual connectivity, \textcolor{black}{which have their internal dynamics dictated by their eventual connectivity} (see \cite[Theorem 1.1]{Fer21}), and so it is of most interest for infinitely connected wandering domains of meromorphic functions.
\end{remark}

Theorem \ref{thm:mix} shows that, even in the multiply connected setting, the internal dynamics of wandering domains exhibit ``uniformity'': if something happens relative to a base point in a non-empty open set, then it happens in all of $U$.

As an application of Theorem \ref{thm:mix}, we construct a new \textcolor{black}{type of} example: a meromorphic function with an infinitely connected \textcolor{black}{semi-contracting} wandering domain. A simply connected example of such a domain, the first of its kind, was constructed by Benini \textit{et al.} using approximation theory; \cite[Theorem 1.1]{Fer21} tells us that, if a semi-contracting orbit of multiply connected wandering domains is to be found, it must be infinitely connected. With that in mind, in Section \ref{sec:example} we modify \cite[Example 2]{BEFRS19} via quasiconformal surgery to prove the following.

\begin{theorem} \label{thm:ex}
There exists a meromorphic function $f$ with an infinitely connected wandering domain $V$ and a non-empty open subset $V'\subset V$ such that, for $z_0\in V'$, $V$ is semi-contracting relative to $z_0$.
\end{theorem}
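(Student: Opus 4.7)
The plan is to modify Example 2 of \cite{BEFRS19} --- an entire function $g$ whose simply connected wandering domain $U^g = \bigcup_n U_n^g$ is semi-contracting relative to a base orbit $(z_n)_{n\in\mathbb{N}}$ --- via quasiconformal surgery that punches infinitely many holes into each $U_n^g$, producing a meromorphic $f$ whose wandering domain $U$ is infinitely connected. Once semi-contraction is verified on some open neighborhood $V$ of $z_0$ inside $U$, Theorem \ref{thm:mix} immediately extends it to all of $U$, reducing the problem to performing the surgery far enough from the orbit.

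The surgery proceeds as follows. Fix a neighborhood $V_0 \ni z_0$ that is semi-contracting relative to $z_0$ for $g$; then $g^n(V_0)$ stays inside a hyperbolic ball of bounded radius around $z_n$ in $U_n^g$. In each $U_n^g$ we can therefore choose a sequence of disjoint disks $(D_{n,k})_{k\in\mathbb{N}}$ at hyperbolic distance at least $R_n\to\infty$ from $z_n$, accumulating on $\partial U_n^g$. On a collar around each preimage $g^{-1}(D_{n+1,k})\cap U_n^g$, redefine $g$ to be a model map with a pole sent into $D_{n+1,k}$, interpolating quasiconformally on a thin annulus. Because the resulting quasiregular $\tilde{g}$ has its dilatation supported disjoint from the orbit and invariant under forward iteration, the measurable Riemann mapping theorem yields a quasiconformal $\varphi$ fixing the orbit and conformal on $V_0$, such that $f:=\varphi\circ\tilde{g}\circ\varphi^{-1}$ is meromorphic. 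The iterates $U_n := \varphi(U_n^g)\setminus\bigcup_k\varphi(D_{n,k})$ are infinitely connected by construction, since each $\varphi(D_{n,k})$ captures a pole of some iterate of $f$.

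It remains to check that $V:=\varphi(V_0)$ is semi-contracting relative to $z_0$ under $f$. Since $\varphi$ is conformal near the orbit and all surgery annuli are hyperbolically far from $z_n$ in $U_n^g$, the hyperbolic metrics on $U_n$ and $U_n^g$ should agree on a neighborhood of $z_n$ up to a multiplicative factor tending to $1$, so that the decreasing limit $c(z_0,w)=\lim_n d_{U_n}(f^n(w),f^n(z_0))$ remains strictly positive for $w\in V$. The main obstacle is making this estimate quantitative: adding punctures to a hyperbolic domain strictly \emph{increases} its density, and the strictly positive value of the limit $c(z_0,w)$ is precisely what must be preserved. This requires choosing the hyperbolic distances $R_n$ to grow fast enough to control, uniformly in $n$, the cumulative effect of infinitely many holes added near the orbit in all iterates.
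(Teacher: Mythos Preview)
Your strategy shares its skeleton with the paper's---start from Benini \textit{et al.}'s semi-contracting example, introduce poles by quasiconformal surgery, then invoke Theorem~\ref{thm:mix}---but diverges on two points, the second of which is a genuine gap.

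First, the paper introduces only \emph{one} pole per $U_n$ (a rescaled Joukowski map centred near $z_n$), not infinitely many disks $(D_{n,k})_k$. Infinite connectivity then comes from the backward orbit: since $f|_{U_n}$ has degree $2$, the single pole in $U_{n+k}$ has $2^k$ preimages in $U_n$, so the wandering domain acquires infinitely many holes automatically. This keeps the surgery finite in each domain and makes the dilatation bookkeeping (Kisaka--Shishikura interpolation, $\prod_n K_n<\infty$) routine. Your scheme with infinitely many surgery annuli per domain is not obviously wrong, but it is needlessly delicate, and your description of where the poles go (``a pole sent into $D_{n+1,k}$'') is garbled.

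Second, and more seriously, your argument for the positive lower bound on $c(z_0,w)$ does not work as written. You claim that because $\varphi$ is conformal on $V_0$, the hyperbolic metrics of $U_n$ and $U_n^g$ ``agree up to a factor tending to $1$'' near the orbit. But the hyperbolic density at a point is a \emph{global} invariant of the domain; local conformality of $\varphi$ on a neighbourhood of the orbit says nothing about it. What you actually need is to compare $d_{\varphi(U_n^g)}(\varphi(a),\varphi(b))$ with $d_{U_n^g}(a,b)$ for a globally $K_\infty$-quasiconformal $\varphi$, and quasiconformal maps do not control hyperbolic distance directly. The paper resolves this by first showing $V_n\subset\psi(U_n)$ (using that repelling periodic points on $\partial U_n$ survive the surgery), then passing to the \emph{quasi-hyperbolic} metric $k_D$, which is bi-Lipschitz to $d_D$ on simply connected domains, and applying the Gehring--Osgood theorem that $K$-quasiconformal maps are H\"older in $k_D$ with exponent $1/K$. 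That is what transfers the positive lower bound across the conjugacy; it is the technical heart of the proof, and it is precisely the step you flag as ``the main obstacle'' without resolving. The fix you gesture at---pushing holes hyperbolically far away---might address the comparison between $U_n$ and $\varphi(U_n^g)$, but not the one between $\varphi(U_n^g)$ and $U_n^g$, which is where the quasiconformal distortion actually lives.
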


In \cite{Fer21}, the author introduced \textit{bimodal} and \textit{trimodal} wandering domains, where different long-term qualitative behaviours of the hyperbolic metric co-exist. In contrast to that, we say that a wandering domain is \textit{unimodal} if the limiting behaviour of the sequence $\left(d_{U_n}\left(f^n(z), f^n(w)\right)\right)_{n\in\mathbb{N}}$ is independent of the choice of $z$ and $w$ in $U$. Given that examples of contracting and eventually isometric examples of multiply connected wandering domains already exist (see, for instance, \cite[Example 1]{RS08} and \cite[Theorem (iii)]{BKL90}), Theorem \ref{thm:ex} completes the proof of the existence of all possible unimodal behaviours in such wandering domains. In particular, we see that the possible internal dynamics of multiply connected wandering domains (and in particular infinitely connected ones) include all possible internal dynamics that exist for simply connected wandering domains.

\noindent\textsc{Acknowledgements.} I would like to thank my supervisors, Phil Rippon and Gwyneth Stallard, for their comments and support. I am also deeply grateful to the referee for their detailed corrections and suggestions, and to Lasse Rempe for pointing out a mistake in the statement of Theorem \ref{thm:mix}.

\section{Preliminaries and notation} \label{sec:prelim}
\color{black}
We use this section to establish some notation and terminology regarding the hyperbolic metric; we refer the reader to \cite{BM06} and \cite[Chapter 3]{Hub06} for a more detailed treatment of the subject. We start with the following definition.
\begin{definition}
A continuous function $f:X\to Y$, where $X$ and $Y$ are topological spaces, is called an \textit{unbranched covering map} if:
\begin{enumerate}[(i)]
    \item $f$ is surjective;
    \item every $y\in Y$ has a neighbourhood $U_y\subset Y$ such that $f^{-1}(U_y)$ is a union of disjoint open sets $V_y\subset X$ and $f:V_y\to U_y$ is a homeomorphism for each $V_y$.
\end{enumerate}
Furthermore, if $X$ is simply connected, $f$ is called a universal covering map and $X$ is called a universal covering space of $Y$.
\end{definition}

With that in mind, we recall Koebe's uniformisation theorem (see \cite[Chapter 10]{Don11} for a modern proof):
\begin{lemma}[The Uniformisation Theorem] \label{lem:uniformisation}
Let $X$ be a Riemann surface. Then, there exists a holomorphic map $f:\tilde{X}\to X$ that is a universal covering map, and $\tilde{X}$ is exactly one of $\Cx$, $\Chat$, or $\mathbb{D} := \{z\in\Cx : |z| < 1\}$, the unit disc. Furthermore, given any $p\in X$, $f$ can be chosen so that $f(0) = p$.
\end{lemma}

In keeping with the name ``uniformisation theorem'', we will sometimes call the universal covering maps given by Lemma \ref{lem:uniformisation} \textit{uniformising maps}. We are interested in the case when the universal covering space of $X$ is $\mathbb{D}$, which happens for instance if $X$ is a domain on $\Chat$ such that $\Chat\setminus X$ contains at least three points (see \cite[Theorem 10.2]{BM06}). In this case, $X$ is called a \textit{hyperbolic surface} (or, if $X\subset\Chat$, a \textit{hyperbolic domain}), for it admits a \textit{hyperbolic metric}: a complete conformal metric of constant curvature $-1$.

Given a hyperbolic surface $X$, we will use $\rho_X$ and $d_X$ to denote its \textit{hyperbolic density} and \textit{hyperbolic distance}, respectively. Since we are following \cite{BM06,Hub06}, the hyperbolic density of the unit disc is
\[ \rho_\mathbb{D}(z) = \frac{2}{1 - |z|^2}, \]
giving us constant curvature $-1$ as promised. A curve $\gamma:[0, 1]\to X$ has \textit{hyperbolic length}
\[ \ell_X(\gamma) := \int_\gamma \rho_X(z)\,|dz|; \]
it is a consequence of the completeness of the hyperbolic metric that any two points $w, z\in X$ can be joined by a smooth curve $\gamma\subset X$ such that
\[ d_X(w, z) = \ell_X(\gamma) = \min\{\ell_X(\gamma') : \text{$\gamma'\subset X$ joins $w$ and $z$}\}, \]
and this $\gamma$ is called a \textit{hyperbolic geodesic}.

Now, let $f:X\to Y$ be a holomorphic map between hyperbolic surfaces. Its \textit{hyperbolic distortion} at $z\in X$ is given by
\[ \|Df(z)\|_X^Y := \lim_{w\to z} \frac{d_Y(f(w), f(z))}{d_X(w, z)} = \frac{\rho_Y(f(z))|f'(z)|}{\rho_X(z)}. \]
This notation refers to the fact that the hyperbolic distortion is also the norm of the differential of $f$ at $z$, viewed as a linear map from $T_zX$ to $T_{f(z)}Y$, with the metrics on the tangent spaces induced by the respective hyperbolic metrics; see \cite[Section~3.3]{Hub06}. The Schwarz-Pick lemma \cite[Theorem 10.5]{BM06} can now be expressed as follows.
\begin{lemma}[Schwarz-Pick Lemma] \label{lem:SP}
Let $f:X\to Y$ be a holomorphic map between hyperbolic Riemann surfaces. Then, for all $z\in X$,
\begin{equation} \label{eq:SPloc}
    \|Df(z)\|_X^Y \leq 1,
\end{equation}
with equality if and only if $X$ is an unbranched covering map. Additionally,
\begin{equation} \label{eq:SPglo}
    d_Y(f(z), f(w))\leq d_X(z, w)
\end{equation}
for any distinct $z$ and $w$ in $X$, with equality if and only if $f$ is biholomorphic.
\end{lemma}

Functions for which equality holds in (\ref{eq:SPloc}) are called \textit{local hyperbolic isometries (or, if the metric is clear from the context, a \textit{local isometry})}: they preserve hyperbolic distances in small neighbourhoods around each point. A function satisfying equality in (\ref{eq:SPglo}) is simply called a \textit{hyperbolic isometry}.

An immediate consequence of Lemma \ref{lem:SP} and the chain rule is that hyperbolic distortion is \textit{locally conformally invariant}. More precisely, we have (see, for instance, \cite[Theorem 10.5]{BM06}, \cite[Proposition 3.3.4]{Hub06} or \cite[Theorem 7.3.1]{KL07}):
\begin{lemma} \label{lem:hypdis}
Let $f:X\to Y$ be a holomorphic map between hyperbolic Riemann surfaces, let $z_0\in X$, let $\varphi:\mathbb{D}\to X$ be a uniformising map such that $\varphi(0) = z_0$, and let $\psi:\mathbb{D}\to Y$ be a uniformising map such that $\psi(0) = f(z_0)$. Then, there exists a holomorphic function $\tilde{f}:\mathbb{D}\to\mathbb{D}$ such that $\tilde{f}(0) = 0$ and $\psi\circ\tilde{f} = f\circ\varphi$. Furthermore, $\tilde{f}$ is unique and satisfies
\begin{equation} \label{eq:hypdis}
    |\tilde{f}'(0)| = \|Df\left(\varphi(0)\right)\|_X^Y.
\end{equation}
\end{lemma}

A function $\tilde{f}$ given by Lemma \ref{lem:hypdis} is called a \textit{lift} of $f$.

Another important aspect of hyperbolic geometry that is relevant to us are the hyperbolic discs. In the hyperbolic surface $X$, those are the open balls
\[ B_X(p, r) := \{z\in X : d_X(p, z) < r\}, \]
where $p\in X$ is the centre and $r > 0$ is the radius. These are not necessarily topologically equivalent to Euclidean open balls; an example would be to take the punctured disc $\mathbb{D}^* := \mathbb{D}\setminus\{0\}$, any point $p\in\mathbb{D}^*$, and a sufficiently large $r > 0$; then, $B_{\mathbb{D}^*}(p, r)$ surrounds the origin, and is therefore doubly connected. Thus, given $p\in X$, we must look at the \textit{injectivity radius} of $X$ at $p$: the largest value of $r > 0$ for which $B_X(p, r)$ is actually isometric to $B_\mathbb{D}(0, r)$. Another way of saying this is: the injectivity radius at $p$ is the largest $r > 0$ for which, taking a uniformising map $\varphi:\mathbb{D}\to X$ with $\varphi(0) = p$, the restriction of $\varphi$ to $B_\mathbb{D}(0, r)$ is injective (hence the name injectivity radius); see \cite[p. 166]{BP92}.

\section{Uniformity in long-term behaviours of the hyperbolic metric} \label{sec:u}
In this section, we prove Theorem \ref{thm:mix}. We divide it into three cases, each one considering a different kind of ``unimodality''.

Before doing that, though, we take some time to discuss the function $u:U\to[0, +\infty)$ defined as
\[ u(z) := \lim_{n\to+\infty} d_{U_n}\left(f^n(z), f^n(z_0)\right), \]
where we have taken some base point $z_0\in U$. It is the limit of the sequence $(u_n)_{n\in\mathbb{N}}$, where
\[ u_n(z) := d_{U_n}\left(f^n(z), f^n(z_0)\right)\text{ \textcolor{black}{for $z\in U$,}} \]
which by the Schwarz-Pick lemma (Lemma \ref{lem:SP}) satisfies
\begin{equation} \label{eq:un}
    u_1\geq u_2\geq \cdots \geq u.
\end{equation}
The following lemma gives us an inkling of how the convergence of the hyperbolic distances to their final values happens.

\begin{lemma} \label{lem:conv}
The convergence $u_n\to u$ is locally uniform. In particular, $u$ is continuous.
\end{lemma}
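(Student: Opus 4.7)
The plan is to reduce the lemma to a combination of the Schwarz--Pick lemma and Dini's theorem, by first showing that $u$ is continuous directly from a uniform Lipschitz bound on the sequence $(u_n)$.

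The first step is to observe that each $u_n$ is continuous on $U$, and, in fact, $1$-Lipschitz with respect to the hyperbolic metric $d_U$. Indeed, for $z, w \in U$, the triangle inequality in $U_n$ gives
\[ \bigl| u_n(z) - u_n(w) \bigr| \;=\; \bigl| d_{U_n}(f^n(z), f^n(z_0)) - d_{U_n}(f^n(w), f^n(z_0)) \bigr| \;\leq\; d_{U_n}(f^n(z), f^n(w)). \]
Applying the Schwarz--Pick lemma $n$ times along the orbit (each iterate $f : U_k \to U_{k+1}$ is a holomorphic map between hyperbolic Riemann surfaces and hence weakly contracts the respective hyperbolic metrics) yields $d_{U_n}(f^n(z), f^n(w)) \leq d_U(z, w)$, so $|u_n(z) - u_n(w)| \leq d_U(z,w)$ uniformly in $n$.

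Passing to the pointwise limit, the same estimate holds for $u$, which is therefore $1$-Lipschitz with respect to $d_U$ and, in particular, continuous on $U$. This already gives the ``in particular'' part of the lemma.

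For local uniform convergence, fix any relatively compact subdomain $K \Subset U$. The functions $u_n$ are continuous, the sequence $(u_n)$ is monotonically decreasing by~\eqref{eq:un}, it converges pointwise to the (now known to be) continuous function $u$, and $K$ is compact; hence Dini's theorem gives uniform convergence $u_n \to u$ on $K$. Since $K$ was arbitrary, the convergence is locally uniform on $U$. No single step looks like a serious obstacle here; the only point requiring any care is the Lipschitz estimate, which relies on the (standard) fact that iterated Schwarz--Pick contractions pull back distances all the way to $d_U$, and on $u_n$ being continuous in the first place --- both of which are straightforward given that the hyperbolic density varies continuously.
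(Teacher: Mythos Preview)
Your proof is correct, and it shares with the paper's argument the same core estimate: the reverse triangle inequality plus the Schwarz--Pick lemma give $|u_n(z)-u_n(w)|\le d_U(z,w)$ uniformly in $n$. Where the two diverge is in how this estimate is cashed in. The paper reads it as equicontinuity, couples it with the obvious pointwise boundedness, and invokes Arzel\`a--Ascoli (with monotonicity upgrading subsequential to full convergence), deducing continuity of $u$ only afterwards. You instead pass the Lipschitz bound directly to the pointwise limit to get continuity of $u$ first, and then appeal to Dini's theorem. Your route is slightly more elementary and arguably cleaner: Dini is a lighter tool than Arzel\`a--Ascoli, and the order ``continuity of $u$ $\Rightarrow$ uniform convergence'' avoids the small detour through subsequences. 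The paper's route, on the other hand, packages the conclusion in a way that would still work without monotonicity (one would just get normality of $(u_n)$), which is irrelevant here but explains the choice.
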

\begin{proof}
We will apply the Arzel\`a-Ascoli theorem \cite[Theorem 14]{Ahl79}.

First, for any $z\in U$, it is clear from the Schwarz-Pick lemma that $(u_n(z))_{n\in\mathbb{N}}$ is contained in the compact set $[0, d_U(z, z_0)]$. Second, we must show that the sequence $(u_n)_{n\in\mathbb{N}}$ is also equicontinuous on compact subsets of $U$; to this end, we apply the reverse triangle inequality to obtain
\begin{equation} \label{eq:rti}
    d_{U_n}\left(f^n(z), f^n(w)\right) \geq \left|d_{U_n}\left(f^n(z), f^n(z_0)\right) - d_{U_n}\left(f^n(w), f^n(z_0)\right)\right| = \left| u_n(z) - u_n(w)\right|
\end{equation}
for every $z, w\in U$. Now, taking any $\epsilon > 0$ and $K\subset U$ a compact set, we can choose $\delta > 0$ such that
\[ \text{$|z - w| < \delta \Rightarrow d_U(z, w) < \epsilon$ for any $z, w\in K$} \]
by the fact that $\rho_U$ is bounded above and below on $K$. Thus, by \textcolor{black}{(\ref{eq:rti}) and} the Schwarz-Pick lemma, we get for all $n\in\mathbb{N}$ that
\[ |u_n(z) - u_n(w)| \leq d_{U_n}\left(f^n(z), f^n(w)\right) \leq d_U(z, w) < \epsilon \]
whenever $|z - w| < \delta$ and $z, w\in K$, and therefore $(u_n)_{n\in\mathbb{N}}$ is equicontinuous on $K$. It follows that the Arzel\`a-Ascoli theorem \cite[Theorem 14]{Ahl79} applies\textcolor{black}{: there exists a subsequence $(u_{n_k})_{k\in\mathbb{N}}$ such that $u_{n_k}\to u$ locally uniformly. Then, (\ref{eq:un}) implies that locally uniform convergence happens for the whole sequence $(u_n)_{n\in\mathbb{N}}$.}

Finally, the locally uniform convergence of $u_n$ implies that $u$ is continuous.
\end{proof}

The continuity of $u$ will be especially important for us as we discuss how the behaviour of orbits in small neighbourhoods ``globalises'' to the whole of $U$.

\subsection{Proof of Theorem \ref{thm:mix}(a): the contracting case.}
What we prove in this section is actually stronger than what Theorem \ref{thm:mix}(a) claims; we will show that, if $U$ contains a non-empty open set $V$ such that
\[ \text{$d_{U_n}\left(f^n(z), f^n(w)\right)\to 0$ as $n\to +\infty$ for every $z, w\in V$}, \]
then the same is true for every $z, w\in U$. In order to do this, we will use results of Benini \textit{et al.} relating to non-autonomous dynamics in the unit disc (more specifically, \cite[Section 2]{BEFRS19}). As preparation for that, we \textcolor{black}{prove} the following lemma\textcolor{black}{, which tells us how to associate the dynamics of a wandering domain to a composition of inner functions. Recall that an \textit{inner function} is a holomorphic function $g:\mathbb{D}\to\mathbb{D}$ such that the radial limit
\[ g(e^{i\theta}) := \lim_{r\nearrow 1} g(re^{i\theta}) \]
exists and satisfies $|g(e^{i\theta})| = 1$ for Lebesgue almost every $\theta\in[0, 2\pi)$.}

\begin{lemma} \label{lem:lifts}
Let $f:\Cx\to\Chat$ be a meromorphic function with a multiply connected wandering domain $U$, and let $z_0$ be a point in $U$. Given uniformising maps $\varphi_n:\mathbb{D}\to U_n$ such that $\varphi_n(0) = f^n(z_0)$, there exists a unique sequence of inner functions $g_n:\mathbb{D}\to\mathbb{D}$ such that, for all $n\geq 0$,
\begin{enumerate}[(i)]
    \item $g_n$ fixes the origin and
    \item \textcolor{black}{$f|_{U_n}$} lifts to $g_n$, that is, $\varphi_{n+1}\circ g_n = f\circ\varphi_n$.
\end{enumerate}
\end{lemma}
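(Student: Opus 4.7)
The plan is to separate the proof into two stages: an existence-and-uniqueness argument for the lifts $g_n$, which is essentially formal, and the verification that these lifts are inner, which is more delicate.

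First I would invoke the standard lifting criterion from covering space theory. Since $\mathbb{D}$ is simply connected and $\varphi_{n+1}:\mathbb{D}\to U_{n+1}$ is the universal covering map of $U_{n+1}$, the holomorphic map $f\circ\varphi_n:\mathbb{D}\to U_{n+1}$ admits a continuous lift $g_n:\mathbb{D}\to\mathbb{D}$ satisfying $\varphi_{n+1}\circ g_n = f\circ\varphi_n$, and any two such lifts differ by post-composition with a deck transformation of $\varphi_{n+1}$. Because $\varphi_{n+1}$ is a local biholomorphism, each such lift is automatically holomorphic. Now $(f\circ\varphi_n)(0) = f^{n+1}(z_0) = \varphi_{n+1}(0)$, so every lift sends $0$ into the fibre $\varphi_{n+1}^{-1}(\varphi_{n+1}(0))$; since the deck group acts simply transitively on this fibre, exactly one lift satisfies condition (i). This simultaneously gives existence, uniqueness, and property (ii).

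Next I would show that this normalised $g_n$ is an inner function. The underlying dynamical reason is that $U_n$ and $U_{n+1}$ are Fatou components, so their topological boundaries lie in the completely invariant Julia set $J(f)$; consequently $f$ cannot send a sequence that escapes every compact subset of $U_n$ to one that stays in a compact subset of $U_{n+1}$. To turn this into a radial-limit statement, I would apply Fatou's theorem on boundary values of bounded holomorphic functions to obtain the radial limits $g_n^{\ast}(\theta) := \lim_{r\nearrow 1} g_n(re^{i\theta})$ for Lebesgue-a.e.\ $\theta$, and then argue by contradiction: if $|g_n^{\ast}(\theta)|<1$ on a set $E\subset\partial\mathbb{D}$ of positive measure, the identity $\varphi_{n+1}(g_n(re^{i\theta})) = f(\varphi_n(re^{i\theta}))$ would force $f\circ\varphi_n$ to accumulate on an interior point of $U_{n+1}$ along a set of radii of positive measure, contradicting the ``escape'' behaviour of the universal cover $\varphi_n$ combined with the complete invariance of $J(f)$ under $f$.

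The main obstacle, as I see it, is making this second stage rigorous when $U_n$ is unbounded and infinitely connected, since Carath\'eodory prime end theory (which handles the bounded simply connected case cleanly) does not apply directly and one must instead use some measure-theoretic boundary theory for universal covers of hyperbolic Riemann surfaces. A pragmatic alternative would be to quote the folklore fact, used implicitly in the simply connected setting of \cite{BEFRS19}, that lifts of self-maps between Fatou components through their universal covers are always inner; with this in hand the entire content of the lemma reduces to the covering-theoretic normalisation argument of the previous paragraph.
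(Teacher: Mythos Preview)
Your existence-and-uniqueness stage matches the paper exactly: both use the lifting criterion for simply connected domains through the universal cover $\varphi_{n+1}$, and the uniqueness of the normalised lift is exactly the paper's appeal to \cite[Proposition 1.34]{Hat02}.

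For the inner-function stage, your strategy is the right one and is the same as the paper's in spirit, but the paper supplies the missing ingredient that you correctly identify as the obstacle. Your contradiction argument needs to rule out the possibility that, along a radius of positive-measure direction, $\varphi_n(re^{i\theta})$ escapes to $\infty$ while $f(\varphi_n(re^{i\theta}))$ nevertheless converges to an interior point of $U_{n+1}$ (an asymptotic value of $f$); the ``complete invariance of $J(f)$'' does not by itself exclude this, and your proposed fallback of quoting a folklore fact is not a proof. The paper closes this gap with Plessner's theorem: since $\varphi_n(\mathbb{D}) = U_n$ omits every other wandering iterate $U_m$, it is not dense in $\widehat{\mathbb{C}}$, and Plessner then guarantees that the angular limit $\varphi_n(e^{i\theta})$ exists and is \emph{finite} for a.e.\ $\theta$. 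Combined with the fact (which you also use) that covering maps have no asymptotic values in their image, this forces $\varphi_n(e^{i\theta}) \in \partial U_n \cap \mathbb{C}$ a.e.; continuity of $f$ at such points and invariance of $J(f)$ then push $f\circ\varphi_n$ to $\partial U_{n+1}$, forcing $|g_n(re^{i\theta})|\to 1$. So the fix for your proof is precisely: insert Plessner's theorem, justified by the observation that a wandering Fatou component is never dense in the sphere.
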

\begin{proof}
The existence, analyticity, and uniqueness of the functions $g_n$ is guaranteed by Lemma \ref{lem:hypdis}. It remains to show that they are inner functions.

Since $\varphi_n(\mathbb{D}) = U_n$ and $U$ is a wandering domain, it is clear that $f\circ\varphi_n(\mathbb{D})$ is not dense in $\Chat$ (indeed, it omits every other $U_m$, $m\neq n + 1$); thus, we can (if necessary) compose $f\circ\varphi_n$ with a M\"obious transformation $m_n$ to obtain a \textit{bounded} map $m_n\circ f\circ \varphi_n:\mathbb{D}\to \Cx$. By Fatou's theorem \cite[~Lemma 6.9]{Hay64}, the radial limit $f\circ\varphi_n(e^{i\theta})$ exists and is finite for $\theta$ outside a set $E_n\subset [0, 2\pi)$ of Lebesgue measure zero. Furthermore, also by Fatou's theorem, there exists a set $E_n'$ of measure zero such that $g_n(e^{i\theta})$ exists for $\theta\in[0, 2\pi)\setminus E_n'$. Now, for every $n\geq 0$, the set $F_n := E_n\cup E_n'$ has measure zero, and for $\theta\in[0,2\pi)\setminus F_n$ we have
\[ \lim_{r\nearrow 1} \varphi_{n+1}\circ g_n(re^{i\theta}) = \lim_{r\nearrow 1} f\circ\varphi_n(re^{i\theta}), \]
where both limits exist and are finite. Since $\varphi_n$, being a covering map, has no asymptotic values in $U_n$ and $f$ is continuous, the right-hand side converges to a point in $\partial U_{n+1}$. Thus, on the left-hand side, we must have $|g_n(re^{i\theta})|\to 1$ as $r\nearrow 1$ by continuity of $\varphi_{n+1}$ and the open mapping theorem. It follows that the $g_n$ are inner functions.
\end{proof}

An immediate consequence of Lemma \ref{lem:lifts} is that, defining $G_n = g_n\circ g_{n-1}\circ \cdots\circ g_0$, we have
\begin{equation} \label{eq:Glifts}
    \text{$\varphi_n\circ G_{n-1} = f^n\circ\varphi_0$ for every $n\geq 1$.}
\end{equation}
However, since the domains $U_n$ are multiply connected, the limiting behaviours of $G_n$ and $(f^n)|_U$ relative to the hyperbolic metric are \textit{not} necessarily the same -- except in one particular case.

\begin{lemma} \label{lem:contr}
With the notation and definitions above, if $G_n(w)\to 0$ as $n\to +\infty$ for all $w\in \mathbb{D}$, then $U$ is a contracting wandering domain.
\end{lemma}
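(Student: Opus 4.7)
My plan is to transfer the contraction from the disc up to the wandering domain through the semi-conjugacy $\varphi_n\circ G_{n-1} = f^n\circ\varphi_0$ provided by Lemma \ref{lem:lifts}, exploiting the fact that holomorphic coverings do not expand hyperbolic distance.

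First I would take arbitrary points $z, w\in U = U_0$ and lift them to $\tilde{z}, \tilde{w}\in\mathbb{D}$ via the covering $\varphi_0$, so that $\varphi_0(\tilde{z}) = z$ and $\varphi_0(\tilde{w}) = w$. Iterating the identity from Lemma \ref{lem:lifts} then gives $f^n(z) = \varphi_n(G_{n-1}(\tilde{z}))$ and $f^n(w) = \varphi_n(G_{n-1}(\tilde{w}))$ for every $n\geq 1$. Since each $\varphi_n\colon \mathbb{D}\to U_n$ is an unbranched holomorphic covering, it is a hyperbolic local isometry and in particular does not increase hyperbolic distance, so the Schwarz-Pick lemma yields
\[ d_{U_n}\bigl(f^n(z), f^n(w)\bigr) \leq d_{\mathbb{D}}\bigl(G_{n-1}(\tilde{z}), G_{n-1}(\tilde{w})\bigr). \]
Combined with the hypothesis that $(G_n)$ is a contracting sequence, so that the right-hand side tends to $0$, this gives $d_{U_n}(f^n(z), f^n(w)) \to 0$ for every pair $z, w\in U$, which is exactly the statement that $U$ is contracting.

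The argument is almost entirely formal once Lemma \ref{lem:lifts} is in hand, and I do not anticipate a serious obstacle. The one point deserving care is to pin down the definition of \emph{contracting sequence} imported from \cite[Section 2]{BEFRS19}: the standard formulation asks that $d_{\mathbb{D}}(G_n(a), G_n(b))\to 0$ for every $a, b\in\mathbb{D}$, which (because $G_n(0) = 0$) is equivalent to $G_n\to 0$ pointwise on $\mathbb{D}$. Either way, the above inequality then delivers the conclusion directly, with no reliance on the multiply connected geometry of the $U_n$ beyond the fact that they are hyperbolic and that $\varphi_n$ is a covering.
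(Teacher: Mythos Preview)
Your argument is correct and follows essentially the same route as the paper: both proofs push the contraction on $\mathbb{D}$ down to $U_n$ via the semi-conjugacy $\varphi_n\circ G_{n-1} = f^n\circ\varphi_0$ and the fact that the covering $\varphi_n$ is distance non-increasing. The only cosmetic difference is that the paper fixes one endpoint at $z_0$ (so that its lift is $0$) and concludes $d_{U_n}(f^n(z_0), f^n\circ\varphi_0(w))\leq d_\mathbb{D}(0, G_{n-1}(w))\to 0$, whereas you lift an arbitrary pair; the two are equivalent by the triangle inequality, and your remark on the two formulations of ``contracting sequence'' handles this cleanly.
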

\begin{proof}
Let $w\in\mathbb{D}$; since $G_n(w)\to 0$ as $n\to+\infty$, we know that $d_\mathbb{D}\left(0, G_n(w)\right)\to 0$. By (\ref{eq:Glifts}), we have
\[ d_{U_n}\left(f^n(z_0), f^n\circ\varphi_0(w)\right) = d_{U_n}\left(\varphi_n(0), \varphi_n\circ G_{n-1}(w)\right)\text{ for $w\in\mathbb{D}$}, \]
and applying the Schwarz-Pick lemma to the right-hand side yields
\[ d_{U_n}\left(f^n(z_0), f^n\circ\varphi_0(w)\right) \leq d_\mathbb{D}\left(0, G_{n-1}(w)\right), \]
which goes to zero as claimed.
\end{proof}

For a sequence $(G_n)_{n\in\mathbb{N}}$ as above, \cite[Theorem 2.1]{BEFRS19} tells us that $d_\mathbb{D}\left(0, G_n(w)\right)\to 0$ for $w\in\mathbb{D}$ if and only if
\[ \sum_{n\geq 0} (1 - |g_n'(0)|) = +\infty, \]
or equivalently, if none of the $g_n'(0)$ equal zero, $G_n'(0) \to 0$. Since $f^n$ lifts to $G_n$, the local conformal invariance of hyperbolic distortion (recall (\ref{eq:hypdis})) implies that this is equivalent to saying that a sufficient condition for $U$ to be a contracting wandering domain is
\[ \lim_{n\to +\infty} \left\|Df^n(z_0)\right\|_U^{U_n} = 0. \]

Now, our course of action is to choose a $z_0\in V$, the contracting neighbourhood inside $U$, and show that the condition above holds. To this end, we prove the following hyperbolic version of Landau's theorem. \textcolor{black}{Unlike in Landau's original theorem, the function here is not normalised (if it were, it would be a hyperbolic isometry!), and thus there is dependence on the derivative of $f$.}

\begin{lemma} \label{lem:landau}
Let $f:\Omega_1\to\Omega_2$ be a holomorphic map between hyperbolic domains, let $r > 0$, and let $z_0\in \Omega_1$. If $0 < \|Df(z_0)\|_{\Omega_1}^{\Omega_2} < 1$, then $f\left(B_{\Omega_1}(z_0, r)\right)$ contains a hyperbolic disc of radius
\[ 2\textcolor{black}{L}r^*\|Df(z_0)\|_{\Omega_1}^{\Omega_2}, \]
where $r^* = \tanh(r/2)$ and \textcolor{black}{$L\in (0.5, 0.544)$ is Landau's constant}.
\end{lemma}
\begin{proof}
First, we take uniformising maps $\varphi:\mathbb{D}\to \Omega_1$ and $\psi:\mathbb{D}\to\Omega_2$ with $\varphi(0) = z_0$ and $\psi(0) = f(z_0)$. Then, by Lemma \ref{lem:hypdis}, $f$ admits a lift $F:\mathbb{D}\to\mathbb{D}$ (i.e., $F$ satisfies $\psi\circ F = f\circ\varphi$), which can be chosen to satisfy $F(0) = 0$, and by (\ref{eq:hypdis}) we also have $|F'(0)| = \|Df(z_0)\|_{\Omega_1}^{\Omega_2}$. The ball $B_\mathbb{D}(0, r)$ is contained (by the Schwarz-Pick lemma) in $\varphi^{-1}\left(B_{\Omega_1}(z_0, r)\right)$, and its image under $F$ contains (by Landau's theorem \cite{Lan29}; see \cite{Min82} for a discussion of the bounds given here) a disc of Euclidean radius $Lr^*|F'(0)| = Lr^*\|Df(z_0)\|_{\Omega_1}^{\Omega_2}$, where we have set $r^* = \tanh(r/2)$ so that $d_\mathbb{D}(0, r^*) = r$. Since we do not know where the centre $w_0$ of this disc is, we cannot accurately calculate its hyperbolic size. Nevertheless, we do know that, for any $w$ on its boundary,
\[ d_\mathbb{D}(w_0, w) = \int_\gamma \rho_\mathbb{D}(s)\,|ds|\geq 2\int_\gamma\, |ds| \geq 2|w - w_0| = 2Lr^*\|Df(z_0)\|_{\Omega_1}^{\Omega_2}, \]
where $\gamma\subset\mathbb{D}$ is a hyperbolic geodesic connecting $w_0$ to $w$. Hence, $F\left(B_\mathbb{D}(0, r)\right)$ contains a disc of hyperbolic radius $2Lr^*\|Df(z_0)\|_{\Omega_1}^{\Omega_2}$.

In order to transfer this knowledge from $\mathbb{D}$ to $\Omega_2$, we are going to use tools from \cite{BCMNg04} (see also \cite[Chapter 10]{KL07}). More specifically, for a subdomain $D$ of $\Omega_2$, let $R(D, \Omega_2)$ denote the hyperbolic radius of the largest hyperbolic disc contained in $D$; this is the hyperbolic Bloch constant, and Beardon \textit{et al.} proved in \cite[Lemma 4.2]{BCMNg04} that it is invariant under unbranched covering maps. More specifically, let $B^*$ denote the component of $\psi^{-1}\left(f\left(B_{\Omega_1}(z_0, r)\right)\right)$ containing the origin. Then, since $f\circ \varphi = \psi\circ F$, $F(0) = 0$, and $B^*$ is connected, we have $B^*\supset F(B_\mathbb{D}(0, r))$, and thus
\[ R(f(B_{\Omega_1}(z_0, r)), \Omega_2) = R(B^*, \mathbb{D}) \geq R(F(B_\mathbb{D}(0, r)), \mathbb{D}). \]
Since we know that $F\left(B_\mathbb{D}(0, r)\right)$ contains a disc of hyperbolic radius $2Lr^*\|Df(z_0)\|_{\Omega_1}^{\Omega_2}$, we have
\[ R(F(B_\mathbb{D}(0, r)), \mathbb{D}) \geq 2Lr^*\|Df(z_0)\|_{\Omega_1}^{\Omega_2} \]
and we are done.
\end{proof}

With these results in hand, we can finally prove what we intended. Notice that, by the triangle inequality, the hypotheses of Theorem \ref{thm:mix}(a) imply those of Theorem \ref{thm:contr}.

\begin{theorem} \label{thm:contr}
Let $U$ be a wandering domain of a meromorphic function $f$. If $U$ contains a non-empty open set $V$ such that
\[ d_{U_n}\left(f^n(z), f^n(w)\right)\to 0 \]
for every $z$ and $w$ in $V$, then $U$ is contracting.
\end{theorem}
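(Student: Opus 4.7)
The plan is to reduce the theorem to the cited derivative criterion. Fix any base point $z_0 \in V$ and consider the uniformising maps $\varphi_n$ together with the lifted inner functions $g_n$ furnished by Lemma \ref{lem:lifts}; by the discussion just before the statement, once we show
\[
\|Df^n(z_0)\|_U^{U_n} \to 0 \quad \text{as } n \to \infty,
\]
\cite[Theorem 2.1]{BEFRS19} guarantees that the composed sequence $(G_n)$ is contracting, and Lemma \ref{lem:contr} concludes that $U$ is contracting.

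To establish the derivative estimate I would argue by contradiction. The sequence $\|Df^n(z_0)\|_U^{U_n}$ is non-increasing (apply the Schwarz-Pick lemma to each $f \colon U_k \to U_{k+1}$ in turn), so if it does not tend to zero there exists $\delta > 0$ with $\|Df^n(z_0)\|_U^{U_n} \geq \delta$ for every $n$. Choose $r_0 > 0$ small enough that $\overline{B_U(z_0, r_0)} \subset V$. A routine rescaling of Lemma \ref{lem:landau} (applying Bloch's theorem on the Euclidean disc of radius $\tanh(r_0/2)$ inside $\mathbb{D}$ rather than $\tanh(1/2)$) shows that $f^n(B_U(z_0, r_0))$ contains a hyperbolic disc in $U_n$ of radius at least $C := 2B\tanh(r_0/2)\,\delta > 0$, uniformly in $n$. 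In particular, the hyperbolic diameter of $f^n(\overline{B_U(z_0, r_0)})$ in $U_n$ is at least $2C$ for all $n$.

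On the other hand, setting $u_n(w) = d_{U_n}(f^n(w), f^n(z_0))$, the hypothesis forces $u_n \to 0$ pointwise on $V$, and Lemma \ref{lem:conv} promotes this to uniform convergence on the compact set $\overline{B_U(z_0, r_0)}$. The triangle inequality then produces
\[
\mathrm{diam}\, f^n\!\left(\overline{B_U(z_0, r_0)}\right) \leq 2 \sup_{w \in \overline{B_U(z_0, r_0)}} u_n(w) \;\longrightarrow\; 0,
\]
contradicting the Landau lower bound. Hence $\|Df^n(z_0)\|_U^{U_n} \to 0$, as required.

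The main difficulty, and where the geometric content of the argument lies, is the rescaling of Landau's lemma: since $V$ need not contain a hyperbolic ball of radius $1$, Lemma \ref{lem:landau} as stated does not apply directly, and one must first verify that the estimate survives at scales $r < 1$. The adjustment is routine, but it is precisely the step that converts the infinitesimal lower bound on the derivatives into a macroscopic obstruction to the contracting behaviour on $V$.
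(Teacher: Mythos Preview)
Your argument is essentially identical to the paper's: fix $z_0\in V$, reduce via Lemmas \ref{lem:lifts} and \ref{lem:contr} and \cite[Theorem 2.1]{BEFRS19} to showing $\|Df^n(z_0)\|_U^{U_n}\to 0$, then argue by contradiction using the rescaled Lemma \ref{lem:landau} against the uniform shrinking of $\overline{B_U(z_0,r_0)}$ given by Lemma \ref{lem:conv}. The paper even makes the same remark (in a footnote) about the routine rescaling of Lemma \ref{lem:landau} to radii other than $1$.
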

\begin{proof}
Take any $z_0\in V$, and take uniformising maps $\varphi_n:\mathbb{D}\to U_n$ such that $\varphi_n(0) = f^n(z_0)$. Let $g_n$ be the lifts of $f$ given by Lemma \ref{lem:lifts}; we want to show that
\begin{equation} \label{eq:sum}
    \sum_{n\geq 0} \left(1 - |g_n'(0)|\right) = +\infty,
\end{equation}
whence our theorem will follow by Lemma \ref{lem:contr} and \cite[Theorem 2.1]{BEFRS19}. We can assume that none of the $g_n'(0)$ are zero; indeed, if there are infinitely many such $g_n$, then (\ref{eq:sum}) is trivially true, and if there are only finitely many such $g_n$ we pass to $U_N$, $f^N(z_0)$, and $f^N(V)$ for some sufficiently large $N$. Thus (recall that $G_n = g_n\circ \cdots\circ g_0$), as anticipated on page 8, (\ref{eq:sum}) is equivalent to $|G_n'(0)|\to 0$, or, by (\ref{eq:hypdis}), to $\|Df^n(z_0)\|_U^{U_n}\to 0$.

Assume now that this is not the case; notice that, again by the Schwarz-Pick lemma, the sequence $(\|Df^n(z_0)\|_U^{U_n})_{n\in\mathbb{N}}$ is decreasing, and so if it gets arbitrarily close to zero on a subsequence then it is in fact tending to zero. In other words, there must exist some constant $c > 0$ such that $\|Df^n(z_0)\|_U^{U_n} > c$ for all $n\geq 0$. Choose some $r > 0$ such that $K := \overline{B_U(z_0, r)}\subset V$; then, by Lemma \ref{lem:conv}, we have $\mathrm{diam}_{U_n}\left(f^n(K)\right)\to 0$, while by Lemma \ref{lem:landau} $f^n(K)$ always contains a hyperbolic ball of radius $2Lc\cdot\tanh(r/2)$.
This is clearly a contradiction; we are done.
\end{proof}

\subsection{Proof of Theorem \ref{thm:mix}(b): the semi-contracting case.} \label{ssec:sc}
In this subsection, we want to show that if there exists some point $z_0\in U$ and an open neighbourhood $V$ of $z_0$ such that $u_n(z)\searrow u(z) > 0$ without ever reaching it for $z\in V$ outside of a discrete set ($u_n$ and $u$ as defined at the beginning of Section \ref{sec:u}, with $z_0$ as base point), then the same holds for every $z\in U$ except those for which $f^n(z) = f^n(z_0)$ for some $n\in\mathbb{N}$.

We will divide the proof in two cases: first, we prove that no point $z\in U$ can be contracting relative to $z_0$, and then that no point can be eventually isometric relative to $z_0$.

For the first case, we will revisit the function $u$; in particular, we notice that it is ``$f$-invariant'' in the sense that, if $u^*:U_1\to [0, +\infty)$ is defined taking as a base point $f(z_0)$, then $u^*(f(z)) = u(z)$. By an abuse of notation, we will keep referring to the functions defined on $U_n$ with base points $f^n(z_0)$ as $u$ and hope it will not lead to confusion. Now, what we want to show is that $u(z) = 0$ if and only if $f^n(z) = f^n(z_0)$ for some $n$.

The ``if'' is trivial; for the ``only if'', let $z\textcolor{black}{\in U}$ be such that $f^n(z)\neq f^n(z_0)$ for all $n$, and assume that $u(z) = 0$. Then, since $U$ is semi-contracting in $V$, we can take a closed curve $\gamma\subset V$ that surrounds $z_0$ and avoids the zeros of $u$, which are discrete in $V$ by hypothesis; by Lemma \ref{lem:conv}, $u|_\gamma$ achieves a minimum $c > 0$. Additionally, $V$ is in the Fatou set, meaning that $(f^n)|_V$ is holomorphic for every $n\in\mathbb{N}$, and so by the argument principle $f^n(\gamma)$ surrounds $B_{U_n}(f^n(z_0), c)$ for all $n\in\mathbb{N}$. Therefore, since we have have $d_{U_n}\left(f^n(z), f^n(z_0)\right)\to 0$, there must be $N_1\in\mathbb{N}$ such that $f^{N_1}(\gamma)$ surrounds $f^{N_1}(z)$, and thus -- again by the argument principle -- there exists $w\in V$ surrounded by $\gamma$ such that $f^{N_1}(w) = f^{N_1}(z)$. Now, by $f$-invariance of $u$, we have $u(w) = u(z) = 0$, and so by the definition of $V$ there exists $N_2\in\mathbb{N}$ for which $f^{N_2}(w) = f^{N_2}(z_0)$. Finally, we see that $f^{N_1+N_2}(z) = f^{N_1+N_2}(z_0)$, which is a contradiction since we assumed that $f^n(z)\neq f^n(z_0)$ for all $n\in\mathbb{N}$. It follows that $u(z) > 0$.

Let us now exclude the possibility of eventually isometric points. \textcolor{black}{We show that} if there exists $z\in U$ such that
\begin{equation} \label{eq:evIsom}
\text{$d_{U_n}\left(f^n(z), f^n(z_0)\right) = c(z, z_0) > 0$ for $n\geq N$,}
\end{equation}
then $f:U_n\to U_{n+1}$ is a local hyperbolic isometry (equivalently, an unbranched covering map) for $n\geq N$. Indeed, let $\gamma\subset U_n$ be a hyperbolic geodesic joining $f^n(z)$ to $f^n(z_0)$; then, by the Schwarz-Pick lemma,
\[ \ell_{U_{n+1}}(f\circ\gamma) \leq \ell_{U_n}(\gamma), \]
while by the definition of the hyperbolic distance and (\ref{eq:evIsom}) we have
\[ \ell_{U_{n+1}}(f\circ\gamma) \geq d_{U_{n+1}}\left(f^{n+1}(z), f^{n+1}(z_0)\right) = d_{U_n}\left(f^n(z), f^n(z_0)\right) = \ell_{U_n}(\gamma). \]
Hence, $\ell_{U_{n+1}}(f\circ \gamma) = \ell_{U_n}(\gamma)$, or, equivalently,
\[ \int_{\gamma} \rho_{U_n}(s)\,|ds| = \int_{f\circ\gamma} \rho_{U_{n+1}}(s')\,|ds'|. \]
By a change of variables, this becomes
\[ \int_\gamma \rho_{U_n}(s)\,|ds| = \int_\gamma \rho_{U_{n+1}}\left(f(s)\right)|f'(s)|\,|ds|, \]
and since $0\leq \rho_{U_{n+1}}\left(f(z)\right)|f'(s)|\leq \rho_{U_n}(s)$ by the Schwarz-Pick lemma, we are forced to conclude that $\rho_{U_{n+1}}\left(f(s)\right)|f'(s)| = \rho_{U_n}(s)$ for every $s\in\gamma$. From the equality case of the Schwarz-Pick lemma, we deduce that $f:U_n\to U_{n+1}$ is a local hyperbolic isometry.

From now on, we will exchange $U$ for $U_N$, $z_0$ for $f^N(z_0)$, and $V$ for $f^N(V)$ for some sufficiently large $N$, and work as if $f$ maps one wandering domain onto the next one locally isometrically for every $n\in\mathbb{N}$. We can do that because of the ``$f$-invariance'' of $u$: if $V\subset U$ is semi-contracting relative to $z_0\in V$, then $f^N(V)\subset U_N$ is semi-contracting relative to $f^N(z_0)\in f^N(V)$.

We will see that $V$ being semi-contracting relative to $z_0\in V$ is incompatible with $f:U_n\to U_{n+1}$ being a local hyperbolic isometry. Indeed, let $w$ be any point in $V$ such that $f^n(w)\neq f^n(z_0)$ for all $n$ and $\overline{B_U\left(z_0, d_U(z_0, w)\right)}\subset V$; since $V$ is semi-contracting, $u_n(w) = d_{U_n}\left(f^n(w), f^n(z_0)\right)$ forms a non-increasing sequence that is also not eventually constant, i.e., there is no $N$ such that $u_n(w)$ is constant for $n\geq N$. However, as $f:U_n\to U_{n+1}$ is a local hyperbolic isometry for all $n\geq 0$, we can produce a sequence $(w_n)_{n\in\mathbb{N}}$ in $U\setminus\{w\}$ such that $f^n(w) = f^n(w_n)$ and $d_{U_n}\left(f^n(w), f^n(z_0)\right) = d_U\left(w_n, z_0\right)$ for all $n\in\mathbb{N}$ as follows.

Taking distance-minimising geodesics $\gamma_n\subset U_n$ joining $f^n(z_0)$ to $f^n(w)$, we apply the path lifting property to $f^n:U\to U_n$ (see \cite[Proposition 10]{Don11} or \cite[Proposition 1.30]{Hat02}) to obtain a curve $\tilde{\gamma}_n\subset U$ joining $z_0$ to a point $w_n\in U$ such that $f^n(w_n) = f^n(w)$. Since $f^n:U\to U_n$ is a local hyperbolic isometry, it preserves curve length, and thus
\[ d_{U_n}\left(f^n(z_0), f^n(w)\right) = \ell_{U_n}(\gamma_n) = \ell_U(\tilde{\gamma}_n) \geq d_U(z_0, w); \]
on the other hand, by the Schwarz-Pick lemma,
\[ d_U(z_0, w_n) \geq d_{U_n}\left(f^n(z_0), f^n(w)\right), \]
and therefore $d_U(z_0, w_n) = d_{U_n}\left(f^n(z_0), f^n(w)\right)$ for all $n\in\mathbb{N}$. Notice, though, that $\left(d_{U_n}\left(f^n(z_0), f^n(w)\right)\right)_{n\in\mathbb{N}}$ is -- by hypothesis -- a non-constant non-increasing sequence, so that $w_n\neq w$ for all large enough $n$.

By the same token, the sequence $(w_n)_{n\in\mathbb{N}}$ is confined to the annulus $\{z\in U : u(w) \leq d_U(z, z_0) \leq d_U(w, z_0)\}\subset V$, and hence has an accumulation point $w^*\in V$. Since $u$ is $f$-invariant, every $w_n$ satisfies $u(w_n) = u(w)$, and thus by the continuity of $u$ we have $u(w^*) = u(w)$. But it is also the case by the definition of the points $w_n$ that $u(w^*) = u(w) = \lim_{n\to+\infty} d_U(w_n, z_0)$, and so by continuity of the hyperbolic distance we have
\[ u(w^*) = d_U(w^*, z_0). \]
It follows from the definition of $u$ that $w^*$ is an eventually isometric point relative to $z_0$ lying in $V$, which is a contradiction.

Together, these two arguments show that every point in $U$ is semi-contracting relative to $z_0$.

\subsection{Proof of Theorem \ref{thm:mix}(c): The eventually isometric case.}
Here, we assume that $U$ contains a non-empty open set $V$ and a point $z_0\in V$ such that $d_{U_n}\left(f^n(z), f^n(z_0)\right) = c(z, z_0) > 0$ for every $z\in V$ (except at most countably many points for which $f^k(z) = f^k(z_0)$ for some $k\in\mathbb{N}$) and all sufficiently large $n$ (say, $n\geq N$, with $N$ locally uniform over compact subsets of $V$). We want to show that the same holds for every $w\in U$ relative to $z_0\in V$. Perhaps surprisingly, this is the most delicate case we will deal with here -- and it needs certain machinery that we take the time to introduce now.

We consider the set $\mathcal{H}_2$ of hyperbolic surfaces, identifying any two surfaces that are isometric. In other words, $\mathcal{H}_2$ is a space of equivalence classes of hyperbolic surfaces. We can ``refine'' it a little by considering \textit{marked} hyperbolic surfaces: pairs $(S, p)$ where $S\in\mathcal{H}_2$ and $p\in S$ is a base point. The space of all such pairs, again up to isometry equivalence, is denoted $\mathcal{H}_2^*$; we will now imbue it with a topology.

\begin{definition}
Let $\left((S_n, p_n)\right)_{n\in\mathbb{N}}$ be a sequence of marked hyperbolic surfaces. We say that the sequence converges to the marked surface $(S^*, p^*)\in\mathcal{H}_2^*$ \textit{in the sense of Gromov} if, for every $r > 0$, there exists a sequence of smooth orientation-preserving diffeomorphisms $\phi_n:U_n\to \phi_n(U_n)\subset S_n$ such that the following hold.
\begin{enumerate}[(i)]
    \item Each $U_n\subset S^*$ is a neighbourhood of $p^*$ containing $B_{S^*}(p^*, r)$.
    \item For all $n\in\mathbb{N}$, $\phi_n(p^*) = p_n$.
    \item Each $\phi_n$ is $K_n$-bi-Lipschitz relative to the hyperbolic metrics of $S^*$ and $S_n$, and $K_n\to 1$ as $n\to +\infty$.
\end{enumerate}
\end{definition}

Despite its (relatively) intuitive definition, this topology -- called the \textit{geometric topology} --  offers little insight into its own properties. Fortunately, there is an equivalent way of defining it via Kleinian groups, originally due to Claude Chabauty, which we will not state here. The following result was proved using this alternative definition (see \cite[Theorem E.1.10]{BP92}, \cite[Corollary I.3.1.7]{CEM06}, or \cite[Proposition 7.8]{MT98}).

\begin{lemma} \label{lem:geom}
Let $\left((S_n, p_n)\right)_{n\in\mathbb{N}}$ be a sequence in $\mathcal{H}_2^*$ such that the injectivity radius of $S_n$ at $p_n$ is at least some $r > 0$ for all $n\in\mathbb{N}$. Then, there exists a subsequence $\left((S_{n_k}, p_{n_k})\right)_{k\in\mathbb{N}}$ converging in the sense of Gromov to a marked hyperbolic surface $(S^*, p^*)$.
\end{lemma}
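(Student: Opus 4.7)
The plan is to pass to universal covers and invoke a compactness theorem for Fuchsian groups. Each $S_n$ is a connected hyperbolic surface, so I would write $S_n = \mathbb{H}^2/\Gamma_n$ for a torsion-free Fuchsian group $\Gamma_n$; by conjugating the uniformisation by an isometry of $\mathbb{H}^2$, I can arrange that a common base point $o\in\mathbb{H}^2$ projects to $p_n$ in $S_n$ for every $n$. The injectivity radius hypothesis then translates into the uniform quantitative discreteness
\[ d_{\mathbb{H}^2}(o, \gamma o) \geq 2r \quad \text{for every } \gamma\in\Gamma_n\setminus\{\mathrm{id}\} \text{ and every } n\in\mathbb{N}. \]

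Next, I would endow the space of closed subgroups of $\mathrm{Isom}^+(\mathbb{H}^2)$ with the Chabauty topology — in which it is a compact metrisable space — and extract a subsequence along which $\Gamma_n$ converges to some closed subgroup $\Gamma^*$. The displayed inequality survives in the limit, so $\Gamma^*$ is a discrete, torsion-free subgroup, and $(S^*, p^*) := (\mathbb{H}^2/\Gamma^*, [o])$ is a well-defined marked hyperbolic surface — the candidate limit.

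The remaining and most delicate step is to upgrade Chabauty convergence of groups to Gromov convergence of quotients. Given $r > 0$, choose $R$ much larger than $r$; the Dirichlet fundamental domain for $\Gamma^*$ centred at $o$ has finitely many face-pairings $\gamma_1^*, \dots, \gamma_k^*$ whose $\Gamma^*$-translates cover $B_{\mathbb{H}^2}(o, R)$. Chabauty convergence, together with the uniform discreteness (which forbids new elements of $\Gamma_n$ from appearing below scale $2r$), produces, for $n$ large, unique $\gamma_{j,n}\in\Gamma_n$ with $\gamma_{j,n}\to\gamma_j^*$ in $\mathrm{Isom}^+(\mathbb{H}^2)$. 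From this data one builds an equivariant, smooth, orientation-preserving diffeomorphism $\widetilde{\phi}_n$ between suitable $\Gamma^*$- and $\Gamma_n$-invariant neighbourhoods of $o$, fixing $o$ and interpolating the face-pairing correspondence $\gamma_j^*\leftrightarrow\gamma_{j,n}$ on fundamental polygons; the convergence of generators forces the bi-Lipschitz constants to tend to $1$. Passing to the quotient then yields the required maps $\phi_n$.

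The main technical obstacle is precisely this equivariant interpolation step: performing it smoothly across the boundaries of fundamental polygons while maintaining quantitative control of the Lipschitz distortion so that $K_n\to 1$. This is the standard ``algebraic limit implies geometric limit'' principle for Kleinian (here, Fuchsian) groups, and the full verification is carried out in the cited references \cite{BP92, CEM06, MT98}.
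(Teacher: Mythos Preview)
Your sketch is correct and follows exactly the route the paper has in mind: the paper does not give its own proof of this lemma but simply cites \cite[Theorem E.1.10]{BP92}, \cite[Corollary I.3.1.7]{CEM06}, and \cite[Proposition 7.8]{MT98}, after remarking that the geometric topology can equivalently be defined via the Chabauty topology on Kleinian (here, Fuchsian) groups. Your outline---lift to $\mathbb{H}^2$, normalise base points, translate the injectivity-radius bound into uniform discreteness at $o$, extract a Chabauty limit $\Gamma^*$, and then upgrade to Gromov convergence by equivariant near-isometries on fundamental domains---is precisely the content of those references, so there is nothing to add.
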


Notice that Lemma \ref{lem:geom} makes no claim about the limit surface (or, to be more precise, the equivalence class of limit surfaces). The nature of the limit surface can, in fact, be extremely counter-intuitive -- but since appreciating the intricacies of the geometric topology is not our aim here, Lemma \ref{lem:geom} will suffice.

Convergence in the sense of Gromov can be though of as a ``locally uniform convergence'' of the surfaces' geometry around the base point. Thus, Lemma \ref{lem:geom} is, in a certain sense, a ``normal families'' criterion for $\mathcal{H}_2$. We will also need more conventional results on normal families, such as the following restatement of the Arzel\`a-Ascoli theorem for Lipschitz functions; see \cite[Theorem~7.1]{BM14}.
\begin{lemma} \label{lem:normal}
Let $X$ and $Y$ be Riemann surfaces with complete conformal metrics $d_X$ and $d_Y$, and suppose that $\{f_\alpha : X\to Y\}_{\alpha\in A}$ is a family of locally uniformly $d_X$-to-$d_Y$ Lipschitz functions. Then, $\{f_\alpha\}_{\alpha\in A}$ is a normal family if and only if there exists $x\in X$ such that $\{f_\alpha(x) : \alpha\in A\}$ is relatively compact in $Y$.
\end{lemma}

With this in mind, let us begin our study of the eventually isometric case in Theorem \ref{thm:mix}.

Let $U$, $f$, $V\subset U$ and $z_0\in V$ be as in the statement of Theorem \ref{thm:mix}(c). We can exclude the possibility of contracting points in $U$ (relative to $z_0$) by an argument similar to the one used in Subsection \ref{ssec:sc}: the iterates of such a point $w\in U$ would eventually intersect $f^n(V)$, and the $f$-invariance of $u$ would imply that $u(w) > 0$, contradicting our choice of $w$.

Now, suppose that there exists a point $w\in U\setminus V$ that is semi-contracting relative to $z_0$. The same argument as in Subsection \ref{ssec:sc} shows that $f:U_n\to U_{n+1}$ is a local hyperbolic isometry for $n\geq N$ ($N$ here being chosen according to some compact subset of $V$), but the rest of the argument does not carry through -- there is nothing unexpected about finding an eventually isometric point relative to $z_0$ in $V$; think, for instance, of the ``annulus model'' described in \cite[Section 2]{Fer21}. Instead, we will proceed with a ``normal families'' argument. As before, we swap $U$ for $U_N$, $z_0$ for $f^N(z_0)$, and $V$ for $f^N(V)$ while keeping the same notation.

First, we take a universal covering map $\varphi_0:\mathbb{D}\to U$ with $\varphi_0(0) = z_0$, and build the functions $\psi_n:\mathbb{D}\to U_n$ given by $\psi_n(z) = f^n\circ\varphi_0(z)$; since $f^n$ is a local hyperbolic isometry, these are all universal covering maps with $\psi_n(0) = f^n(z_0)$. Furthermore, the hypothesis that $V$ is locally eventually isometric implies that, if $r_0$ is such that $\overline{B_U(z_0, r_0)}\subset V$ is an embedded disc (and $N$ was chosen accordingly), then $f^n$ maps $B_U(z_0, r_0)$ isometrically onto $f^n(B_U(z_0, r_0))$ for all $n$. In particular the sets $f^n\left(B_U(z_0, r_0)\right)$ will all be embedded discs of hyperbolic radius $r_0$. Therefore, the injectivity radii at $f^n(z_0)$ of the hyperbolic surfaces $U_n$ are uniformly bounded below by $r_0$. Thus, by Lemma \ref{lem:geom}, the sequence of marked hyperbolic surfaces $((U_n, f^n(z_0)))_{n\in\mathbb{N}}$ admits a subsequence $((U_{n_k}, f^{n_k}(z_0)))_{k\in\mathbb{N}}$ converging in the sense of Gromov to a marked hyperbolic surface $(U^*, z^*)$ (say).

It follows from the definition, then, that we can take some $r > 10d_U(z_0, w)$ (where $w\in U\setminus V$ is assumed to be semi-contracting relative to $z_0$) and find diffeomorphisms $\phi_k:\overline{B_{U^*}(z^*, r)}\to \phi_k\left(\overline{B_{U^*}(z^*, r)}\right)\subset U_{n_k}$ such that $\phi_k(z^*) = f^{n_k}(z_0)$ and each $\phi_k$ is $K_k$-bi-Lipschitz relative to the hyperbolic metrics on $U^*$ and $U_{n_k}$, with $K_k\to 1$ as $k\to +\infty$. Since $K_k\to 1$, we can assume without loss of generality that $\sup_k K_k < 10$, so that every $\phi_k^{-1}$ is defined on a hyperbolic ball of radius $d_U(z_0, w)$ around $f^{n_k}(z_0)$. In particular, the functions
\[ \widehat{\psi}_k = \phi_k^{-1}\circ \psi_{n_k} : B_\mathbb{D}(0, d_U(z_0, w)) \to U^* \]
are all well-defined and map $0$ to $z^*$. We want to show that $\{\widehat{\psi}_k\}_{k\in\mathbb{N}}$ is a normal family in the sense of precompactness relative to locally uniform convergence (see \cite{BM14}, as well as \cite[Theorem 1.21]{BF14} and \cite{HM20} for an account of this interpretation of normality) with analytic limit functions; to this end, we make two claims.

\begin{claim} \label{cl:qr}
Each $\widehat{\psi}_k$ is $(K_k)^2$-quasiregular.
\end{claim}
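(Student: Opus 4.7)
The plan is to decompose $\widehat{\psi}_k$ as the composition of a holomorphic map and a $K_k^2$-quasiconformal diffeomorphism, and then appeal to the standard fact that quasiregularity behaves well under such compositions.

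First, I would note that $\psi_{n_k} = f^{n_k}\circ \varphi_0$ is holomorphic (as a composition of holomorphic maps) and hence $1$-quasiregular. Since the composition of a $K$-quasiconformal homeomorphism with a $K'$-quasiregular map is $KK'$-quasiregular, the proof reduces to showing that $\phi_k^{-1}$ is $K_k^2$-quasiconformal in the usual sense (i.e., with respect to the conformal structures on $U^*$ and $U_{n_k}$).

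Next, I would bridge the gap between the hyperbolic bi-Lipschitz hypothesis and the Euclidean/conformal notion of quasiconformality. Since the hyperbolic metrics are conformal to the Euclidean metric in local charts, the quasiconformal dilatation of $\phi_k$ can be read off from its derivative as a real-linear map in these charts. Working at a point $z\in \overline{B_{U^*}(z^*,r)}$ and picking holomorphic charts around $z$ and $\phi_k(z)$ with hyperbolic densities $\rho_{U^*}$ and $\rho_{U_{n_k}}$, the $K_k$-bi-Lipschitz condition on $\phi_k$ translates to
\begin{equation*}
K_k^{-1}\,\rho_{U^*}(z)\,|v| \;\leq\; \rho_{U_{n_k}}(\phi_k(z))\,|D\phi_k(z)\,v| \;\leq\; K_k\,\rho_{U^*}(z)\,|v|
\end{equation*}
for every tangent vector $v$. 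Denoting the singular values of $D\phi_k(z)$ by $\sigma_1\geq \sigma_2\geq 0$, this yields
\begin{equation*}
\sigma_1 \leq K_k\,\frac{\rho_{U^*}(z)}{\rho_{U_{n_k}}(\phi_k(z))}, \qquad \sigma_2 \geq K_k^{-1}\,\frac{\rho_{U^*}(z)}{\rho_{U_{n_k}}(\phi_k(z))},
\end{equation*}
so the (Euclidean) dilatation $\sigma_1/\sigma_2$ of $\phi_k$ at $z$ is at most $K_k^2$. Hence $\phi_k$, and therefore its inverse, is $K_k^2$-quasiconformal.

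Putting the two pieces together, $\widehat{\psi}_k = \phi_k^{-1}\circ\psi_{n_k}$ is the composition of a $1$-quasiregular map with a $K_k^2$-quasiconformal diffeomorphism, so it is $K_k^2$-quasiregular on $B_\mathbb{D}(0, d_U(z_0, w))$, as claimed. I do not foresee any real obstacle here: the only subtle point is the dictionary between bi-Lipschitz constants in the hyperbolic metric and dilatation in the conformal structure, but this is a routine singular-value computation once one uses that the hyperbolic metric is conformal to the Euclidean one.
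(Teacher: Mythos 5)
Your proof is correct and takes essentially the same approach as the paper: decompose $\widehat{\psi}_k = \phi_k^{-1}\circ\psi_{n_k}$, use that $\psi_{n_k}$ is holomorphic (hence $1$-quasiregular), and that a $K_k$-bi-Lipschitz diffeomorphism between hyperbolic surfaces is $K_k^2$-quasiconformal. The only difference is that the paper cites references (Bojarski et al., Branner--Fagella) for the bi-Lipschitz-to-quasiconformal step, whereas you carry out the singular-value computation explicitly; your computation is correct, and in particular you correctly exploit that the hyperbolic metric is a conformal rescaling of the Euclidean one, so the density factors cancel in the ratio $\sigma_1/\sigma_2$.
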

\begin{proof}
Being $K_k$-bi-Lipschitz, each $\phi_k$ is $(K_k)^2$-quasiconformal (see, for instance, \cite[Chapter 1]{BGMR13} or \cite[Proposition 4.5.14]{Hub06}), and the composition of a restriction of a universal covering map (which is, of course, $1$-quasiregular) with a $(K_k)^2$-quasiconformal map yields a $K_k^2$-quasiregular map.
\end{proof}

\begin{claim}
The family $\{\widehat{\psi}_k\}_k$ is uniformly Lipschitz relative to the hyperbolic metric.
\end{claim}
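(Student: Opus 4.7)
The plan is to chain two ingredients: the Schwarz-Pick estimate for the universal covering maps $\psi_{n_k}$, and the bi-Lipschitz bound built into the definition of Gromov convergence for $\phi_k$.

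First, I would recall that $\psi_{n_k} : \mathbb{D} \to U_{n_k}$ is a universal covering and hence, by the Schwarz-Pick lemma, a weak contraction with respect to the hyperbolic metrics: for any $z, w' \in B_\mathbb{D}(0, d_U(z_0, w))$,
\[ d_{U_{n_k}}\bigl(\psi_{n_k}(z), \psi_{n_k}(w')\bigr) \leq d_\mathbb{D}(z, w'). \]
Since these distances are bounded by $2 d_U(z_0, w) < r/5$, the images $\psi_{n_k}(z), \psi_{n_k}(w')$ remain in the hyperbolic ball around $f^{n_k}(z_0)$ on which $\phi_k^{-1}$ is defined (this is exactly the reason we took $r > 10 d_U(z_0, w)$ at the outset).

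Next, I would use that a $K_k$-bi-Lipschitz diffeomorphism has a $K_k$-bi-Lipschitz inverse; in particular, $\phi_k^{-1}$ is $K_k$-Lipschitz on its domain. Composing,
\[ d_{U^*}\bigl(\widehat{\psi}_k(z), \widehat{\psi}_k(w')\bigr) \leq K_k\, d_{U_{n_k}}\bigl(\psi_{n_k}(z), \psi_{n_k}(w')\bigr) \leq K_k\, d_\mathbb{D}(z, w'). \]
Because $K_k \to 1$ and in particular $\sup_k K_k < 10$ by the ``waiting long enough'' assumption made just before the claims, each $\widehat{\psi}_k$ is Lipschitz with a constant independent of $k$, proving the claim.

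There is no real obstacle here: both bounds are immediate from statements already invoked in the setup, and the only point one needs to be a little careful about is verifying that $\widehat{\psi}_k(z)$ and $\widehat{\psi}_k(w')$ genuinely lie in the domain of $\phi_k^{-1}$, which is why the choice $r > 10 d_U(z_0, w)$ was built into the construction. Once uniform Lipschitz control is in hand, it combines with the previous claim (quasiregularity with $K_k^2 \to 1$) to feed the family into a normal families theorem for quasiregular maps, giving a subsequential locally uniform limit that is $1$-quasiregular and hence holomorphic, as required for the remainder of the argument.
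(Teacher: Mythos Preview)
Your argument is correct and essentially identical to the paper's: both chain the Schwarz--Pick contraction for the covering maps $\psi_{n_k}$ with the $K_k$-bi-Lipschitz bound for $\phi_k^{-1}$ to obtain $d_{U^*}(\widehat{\psi}_k(z),\widehat{\psi}_k(w'))\leq K_k\,d_\mathbb{D}(z,w')$, and then use $\sup_k K_k<\infty$. The paper adds one more trivial inequality $d_\mathbb{D}(z,w')\leq d_{B_w}(z,w')$ (with $B_w=B_\mathbb{D}(0,d_U(z_0,w))$) so that the Lipschitz bound is stated relative to the intrinsic hyperbolic metric of the actual domain of $\widehat{\psi}_k$, but this is immediate from domain monotonicity and your estimate already implies it.
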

\begin{proof}
For $z$ and $z'$ in $B_w = B_\mathbb{D}(0, d_U(z_0, w))$, we have, using the bi-Lipschitz constant of $\phi_k$ and the Schwarz-Pick lemma respectively,
\[ d_{U^*}\left(\phi_k^{-1}\circ\psi_{n_k}(z), \phi_k^{-1}\circ\psi_{n_k}(z')\right) \leq K_kd_{U_{n_k}}\left(\psi_{n_k}(z), \psi_{n_k}(z')\right) \leq K_kd_\mathbb{D}(z, z') \leq K_kd_{B_w}(z, z'). \]
Since $K_k\to 1$, we have that $K := \sup_k K_k < 10$ is a uniform Lipschitz constant for $\phi_k^{-1}\circ \psi_{n_k} = \widehat{\psi}_k$.
\end{proof}

Now, since $\{\widehat{\psi}_k(0) : k\in\mathbb{N}\} = \{z^*\}$ is clearly a relatively compact subset of $U^*$, Lemma \ref{lem:normal} tells us that $(\widehat{\psi}_k)_k$ is a normal family, and so admits a subsequence $(\widehat{\psi}_{k_m})_m$ converging locally uniformly to a limit function
\[ \psi^*:B_\mathbb{D}(0, d_U(z_0, w))\to U^*. \]
Since $K_k\to 1$ and the limit of $K$-quasiregular functions is $K$-quasiregular, it follows by Claim \ref{cl:qr} that $\psi^*$ is $1$-quasiregular (see for example \cite[Theorem 4.2]{BGMR13}, \cite{HM20}, \cite[Corollary 5.5.7]{AIM09}, or \cite[Theorem VI.8.6]{Ric93}) and hence analytic by the quasiregular version of Weyl's lemma (see \cite[Proposition 1.37]{BF14}).

We want to show that $\psi^*$ is not constant; to that end, take a point $z'\in B_\mathbb{D}(0, d_U(z_0, w))$ such that $\varphi_0(z')\in V\setminus\{z_0\}$. Then, by the definition of the maps $\widehat{\psi}_n$ and the bi-Lipschitz property of $\phi_k$,
\[ d_{U^*}\left(z^*, \widehat{\psi}_{k_m}(z')\right)\geq \frac{d_{U_n}\left(\psi_{k_m}(0), \psi_{k_m}(z')\right)}{K_{k_m}} \geq \frac{d_{U_n}\left(\psi_{k_m}(0), \psi_{k_m}(z')\right)}{10}; \]
since $\psi_n = f^n\circ\varphi_0$ and $f^n$ is isometric when restricted to $V$, we have that $d_{U_n}(\psi_{k_m}(0), \psi_{k_m}(z')) = d_U(z_0, \varphi_0(z')) > 0$ for all $m$. Thus, by making $m\to +\infty$, we see that $d_{U^*}(\psi^*(0), \psi^*(z')) > 0$, and so $\psi^*$ is indeed non-constant.

We are now in position to make a case against the existence of $w$, the semi-contracting point relative to $z_0$. The fact that $f:U_n\to U_{n+1}$ are local hyperbolic isometries implies that, in order for $d_{U_n}\left(f^n(w), f^n(z_0)\right)$ to decrease infinitely many times, we must (as in Subsection \ref{ssec:sc}) be able to find a sequence $w_n\in U\setminus\{w\}$ such that $f^n(w_k) = f^n(w)$ for $1\leq k\leq n$ and $d_U(w_n, z_0) = d_{U_n}\left(f^n(w), f^n(z_0)\right) < d_U(z_0, w)$. By the completeness of the hyperbolic metric in $U$, the sequence $w_n$ admits an accumulation point $w^*\in B_U\left(z_0, d_U(z_0, w)\right)$, and this lifts to a sequence $\tilde{w}_n\in B_\mathbb{D}\left(0, d_U(z_0, w)\right)$ with an accumulation point $\tilde{w}^*\in B_\mathbb{D}\left(0, d_U(z_0, w)\right)$. However, repeated application of the triangle inequality yields
\begin{multline*}
d_{U^*}\left(\psi^*(\tilde{w}_{k_m}), \psi^*(\tilde{w}^*)\right) \leq \\
d_{U^*}\left(\psi^*(\tilde{w}_{k_m}), \widehat{\psi}_{k_l}(\tilde{w}_{k_m})\right) + d_{U^*}\left(\widehat{\psi}_{k_l}(\tilde{w}_{k_m}), \widehat{\psi}_{k_l}(\tilde{w}_{k_l})\right) + d_{U^*}\left(\widehat{\psi}_{k_l}(\tilde{w}_{k_l}), \psi^*(\tilde{w}^*)\right),
\end{multline*}
where $m\in\mathbb{N}$ is fixed and $l\geq m$. By the definitions of $\tilde{w}_n$ and $\widehat{\psi}_k$, we have that $\widehat{\psi}_{k_l}(\tilde{w}_{k_l}) = \widehat{\psi}_{k_l}(\tilde{w}_{k_m}$), so that the middle term in the sum above vanishes. As for the other two, taking the limit $l\to +\infty$ makes them arbitrarily small, since $\widehat{\psi}_{k_l}$ converges to $\psi^*$ locally uniformly. It follows that $\psi^*(\tilde{w}_{k_m}) = \psi^*(\tilde{w}^*)$ for all $m\in\mathbb{N}$: a contradiction, since $\psi^*$ is analytic and as such $(\psi^*)^{-1}(p)$ is discrete for any $p\in U^*$.

We have completed the proof of case (c), and hence finished the proof of Theorem~\ref{thm:mix}.

\begin{remark} This argument can be adapted to rule out the existence of contracting points \textit{and} to drop the dependence on the base point $z_0$, which takes care of the eventually isometric case in its general form. Indeed, the contradiction above can be obtained for any pair of points $z, w\in U$ such that $d_{U_n}(f^n(z), f^n(w))$ is not eventually constant -- one need only choose a sufficiently large radius for the discs where the $\phi_k$ are to be defined.
\end{remark}

\section{A semi-contracting multiply connected wandering domain} \label{sec:example}
\subsection{The patient.} \label{ssec:patient}
Benini \textit{et al.} constructed the first known examples of semi-contracting wandering domains using approximation theory (the one we are interested in is \cite[Example 2(a)]{BEFRS19}). As such, we can describe its properties mostly in an asymptotic fashion -- but by controlling the rate of convergence, this will suffice for our ends.

To describe the example, we shall need the following sets and functions. Let $T_n:\Cx\to\Cx$ and $b_n:\mathbb{D}\to\mathbb{D}$ be defined as
\[ \text{$T_n(z) = z + 4n$ and $b_n(z) = z\cdot\frac{z + a_n}{1 + a_nz}$,} \]
where $a_n$ is a real sequence satisfying $0 < a_n < 1$ and $a_n\nearrow 1$ fast enough that
\[ \lambda := \prod_{n = 1}^{+\infty} a_n > 0; \]
as our construction progresses, we will impose further restrictions on the speed with which $a_n\nearrow 1$. Define also the discs $\Delta_n' := B(4n, r_n)$ and $\Delta_n := B(4n, R_n)$, where $0 < r_n < 1 < R_n$, both sequences $(r_n)_{n\in\mathbb{N}}$ and $(R_n)_{n\in\mathbb{N}}$ tend to $1$ as $n\to +\infty$, and the rate of convergence is relatively ``free'' -- that is, $1 - r_n$ and $R_n - 1$ have upper bounds depending only on $1 - a_n$.

With all that in place, their example consists of an entire function $f$ with a bounded simply connected wandering domain $U$ such that, for all $n\geq 0$,
\begin{enumerate}[(A)]
    \item $\overline{\Delta_n'}\subset U_n\subset \Delta_n$, so, in particular, $U_n$ ``is asymptotically a disc'' (recall that $U_n$ is the Fatou component containing $f^n(U)$);
    \item $\left|f(z) - T_{n+1}\circ b_{n+1}\circ T_n^{-1}(z)\right| < \epsilon_{n+1}$ for $z\in \overline{\Delta_n}$, where $\epsilon_n < (R_n - r_n)/4$;
    \item $f^n(0) = 4n$;
    \item $\deg f|_{U_n} = \deg b_{n+1} = 2$ and $f|_{U_n}$ satisfies
    \[ \text{$|f(z) - z - 4|\to 0$ locally uniformly for $z\in U_n$ as $n\to +\infty$.} \]
    In particular, the critical points $z_n$ of $f|_{U_n}$ satisfy
    \[ \text{$\dist(z_n, \partial U_n)\to 0$ as $n\to+\infty$,} \]
    where $\dist$ denotes Euclidean distance.
\end{enumerate}

It was shown in \cite[Example 2]{BEFRS19} that the wandering domain $U$ is semi-contracting; let us take a closer look at it.

Let $B_n(z) := b_n\circ b_{n-1}\circ \cdots \circ b_1(z)$ for $n\in\mathbb{N}$. By Montel's theorem, $B_n$ admits a subsequence converging locally uniformly to some $B:\mathbb{D}\to\overline{\mathbb{D}}$; since $b_n(0) = 0$ for all $n\in\mathbb{N}$, we have $B(0) = 0$, and thus $B:\mathbb{D}\to\mathbb{D}$. Furthermore, by the Weiestrass convergence theorem and the assumptions on $a_n$,
\[ B'(0) = \lim_{n\to+\infty} B_n'(0) = \prod_{n=1}^{+\infty} a_n = \lambda > 0, \]
and so $B$ is non-constant. Additionally, since $B_n'(0) = \prod_{k=1}^n a_k > 0$ and, by Schwarz's lemma applied to the sequence $(B_n)_{n\in\mathbb{N}}$,
\begin{equation} \label{eq:Bndec}
\text{$|B_n(z)|\searrow |B(z)|$ for all $z\in\mathbb{D}$,}
\end{equation}
the limit function $B$ is unique.

Next, \color{black}we recall \cite[Corollary 2.4]{BEFRS19}:
\begin{lemma} \label{lem:cor2.4}
Let $h:\mathbb{D}\to\mathbb{D}$ be a holomorphic function with $h(0) = 0$ and $|h'(0)| = \mu$. Then, for all $w\in\mathbb{D}$,
\[ h_1(|w|) := |w|\cdot\frac{\mu - |w|}{1 - \mu|w|} \leq |h(w)| \leq |w|\cdot\frac{\mu + |w|}{1 + \mu|w|} =: h_2(|w|). \]
\end{lemma}

Elementary calculus shows that the function $h_1:[0, 1]\to\mathbb{R}$ is a concave function with maximum
\[ \frac{2\mu^2 + (2 - \mu^2)\sqrt{1 - \mu^2} - 2}{\mu^2\sqrt{1 - \mu^2}}. \]
Thus, if we \color{black}choose $c\in(0, 1)$ \textcolor{black}{such that
\[ c < \frac{2\lambda^2 + (2 - \lambda^2)\sqrt{1 - \lambda^2} - 2}{\lambda^2\sqrt{1 - \lambda^2}}, \]
then there exists a round annulus $A\subset \mathbb{D}$ centred at $0$ such that $|B(z)| > c$ for $z\in A$. Recall now (\ref{eq:Bndec}); in particular, since $\left(|B_n(z)|\right)_{n\in\mathbb{N}}$ is a non-increasing sequence for all $z\in\mathbb{D}$ and $B_n(z) = b_n\circ b_{n-1}\circ \cdots\circ b_1(z)$, we also have $|b_n(z)| > c$ for $z\in A$ for all $n\in\mathbb{N}$.}

\textcolor{black}{Take now some positive $c' < c$; applying Rouche's theorem to the condition (B) satisfied by $f$ we conclude that, for all sufficiently large $n$, there exists a topological annulus $A_n'\subset \overline{\Delta_n'}$ such that $|f(z) - 4(n+1)| > c'$ for all $z\in A_n'$, and $A_n'$ surrounds the disc $\{z\in\Cx : |z - 4n| \leq c'\}$.}

\subsection{The surgery.}
At each $U_n$, we want to cut out a small disc and replace $f$ by an appropriately re-scaled \textcolor{black}{version of the Joukowski map $z\mapsto z + z^{-1}$} inside of it, giving us a pole in each domain. After that, we must join $f$ to the Joukowski map through quasiconformal interpolation in an appropriate annulus \textcolor{black}{$A_n\subset U_n$ such that $\overline{A_n}\subset U_n$}. Since we want the resulting quasiregular map $g_0$ to be quasiconformally conjugate to a meromorphic one, there are two \textcolor{black}{conditions we require to hold}:
\begin{enumerate}
    \item Ensure that \textcolor{black}{the dilatations $K_n$ of the interpolating map in $A_n$ (respectively) satisfy}
    \[ \prod_{n=1}^{+\infty} K_n < +\infty; \]
    \item \textcolor{black}{For any $z\in A_n$, its orbit under $g_0$ does not intersect $A_m$ for $m < n$.}
\end{enumerate}

The two main results we will use in this surgery are both due to Kisaka and Shishikura; first, we have a way to interpolate quasiconformally in \textcolor{black}{round} annuli \cite[Lemma 6.2]{KS08}.

\begin{lemma} \label{lem:interpolate}
Let $k\in\mathbb{N}$, $0 < R_1 < R_2$ and $\varphi_j$ be analytic on a neighbourhood of $C_j := \{|z| = R_j\}$ such that $\varphi_j|_{C_j}$ winds around the origin $k$ times ($j = 1, 2$). If there exist positive constants $\delta_0$ and $\delta_1$ such that
\begin{equation} \label{eq:cond1}
    \left|\log\left(\frac{\varphi_2(R_2e^{i\theta})}{R_2^k}\frac{R_1^k}{\varphi_1(R_1e^{i\theta})}\right)\right|\leq \delta_0
\end{equation}
and
\begin{equation} \label{eq:cond2}
    \left|z\frac{d}{dz}\left(\log\frac{\varphi_j(z)}{z^k}\right)\right| \leq \delta_1,\quad z = R_je^{i\theta},\quad (j = 1, 2)
\end{equation}
for every $\theta\in [0, 2\pi)$, and if $\delta_0$ and $\delta_1$ satisfy
\begin{equation} \label{eq:KSc}
    C = 1 - \frac{1}{k}\left(\frac{\delta_0}{\log\left(R_2/R_1\right)} + \delta_1\right) > 0,
\end{equation}
then there exists a quasiregular map
\[ H : \{z : R_1 \leq |z|\leq R_2\}\to\Cx^* \]
without critical points that interpolates between $\varphi_1$ and $\varphi_2$ and has dilatation constant
\begin{equation} \label{eq:KSk}
    K_H \leq \frac{1}{C}.
\end{equation}
\end{lemma}

Second, \textcolor{black}{we need} a sufficient condition for us to conjugate the resulting quasiregular map to a meromorphic one \cite[Theorem 3.1]{KS08}. Although it was originally stated only for entire maps, it is easy to see how to deal with the presence of poles\textcolor{black}{: this lemma is a particular case of the necessary and sufficient conditions given by Sullivan's Straightening Theorem, which is flexible enough for transcendental meromorphic maps (see \cite[Sections 5.2 and 5.3]{BF14}).}

\begin{lemma} \label{lem:straighten}
Let $g:\Cx\to\Chat$ be a quasiregular map. Suppose there are (disjoint) measurable sets $E_j\subset \Cx$, $j = 1, 2, \ldots$, such that
\begin{enumerate}[(i)]
    \item for almost every $z\in\Cx$, the $g$-orbit of $z$ meets $E_j$ at most once for every $j$,
    \item $g$ is $K_j$-quasiregular on $E_j$,
    \item $K_\infty := \prod_{j\geq 1} K_j < +\infty$, and
    \item $g$ is holomorphic (Lebesgue) a.e. outside $\bigcup_{j\geq 1} E_j$.
\end{enumerate}
Then, there exists a $K_\infty$-quasiconformal map $\phi$ (``fixing infinity'') such that $f = \phi\circ g\circ \phi^{-1}$ is a meromorphic function.
\end{lemma}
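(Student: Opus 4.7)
The plan is to use the standard surgery/straightening recipe going back to Sullivan and made explicit in this form by Shishikura: construct a measurable, $g$-invariant Beltrami coefficient $\mu$ on $\Cx$ with $\|\mu\|_\infty \le k_\infty := (K_\infty-1)/(K_\infty+1) < 1$, integrate it via the measurable Riemann mapping theorem (Ahlfors--Bers) to obtain a quasiconformal homeomorphism $\phi:\Chat\to\Chat$ normalised to fix $\infty$, and then verify that $f := \phi \circ g \circ \phi^{-1}$ has zero Beltrami coefficient almost everywhere. That suffices, because a $1$-quasiregular map of $\Chat$ that is continuous and finite off a discrete set is meromorphic by Weyl's lemma (together with the standard observation that poles of $g$ are carried to poles of $f$ under conjugation, since $\phi$ fixes $\infty$ and $1/g$ is quasiregular near $g^{-1}(\infty)$).

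The heart of the argument is producing such a $\mu$. I would define it orbit-by-orbit. By hypothesis (i), for almost every $z\in\Cx$ the forward orbit $\{g^{k}(z)\}_{k\ge 0}$ enters each $E_{j}$ at most once. On the set where the orbit never enters $E:=\bigcup_{j\ge 1}E_{j}$, set $\mu\equiv 0$; by (iv), $g$ is holomorphic along such orbits, so this is consistent with the invariance I want. For the remaining $z$, let $0\le k_{1}<k_{2}<\cdots$ be the visit times and $j_{1},j_{2},\dots$ the (distinct) indices with $g^{k_{i}}(z)\in E_{j_{i}}$, and build $\mu(z)$ by pulling the standard structure back through the orbit, using the chain rule for Beltrami coefficients at each quasiregular step $g:E_{j_{i}}\to\Cx$ (which contributes dilatation $K_{j_{i}}$ by (ii)) and the trivial pull-back at each holomorphic step in between. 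The total accumulated dilatation is $\prod_{i}K_{j_{i}}\le \prod_{j\ge 1}K_{j}=K_\infty$ by (iii), hence $|\mu(z)|\le k_\infty$. A short check, again via the chain rule for Beltrami coefficients, confirms $g^{*}\mu=\mu$ almost everywhere, because at a.e.\ $z$ the data defining $\mu(z)$ and the data defining $\mu(g(z))$ differ by exactly one application of $g$.

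Once $\mu$ is in hand, the measurable Riemann mapping theorem yields $\phi$ with $\mu_\phi=\mu$ a.e., and the invariance $g^{*}\mu=\mu$ translates directly into $\mu_{f}=0$ a.e. on $\Cx$, as required. The main obstacle I anticipate is making the orbit-by-orbit construction fully rigorous in the transcendental setting: unlike in the classical rational case, orbits may visit infinitely many $E_{j}$, so the ``pull-back through the whole orbit'' is genuinely an infinite composition, and one has to verify both measurability of $\mu$ and the pointwise identity $g^{*}\mu=\mu$ along infinitely many orbit branches. What keeps everything under control is exactly the combination of (iii), which gives the uniform dilatation bound $K_\infty<+\infty$, and (i), which ensures that each $K_{j}$ is charged at most once per orbit so that the infinite product really does converge and stays strictly below $1$. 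Handling the poles is a minor technical point: $g^{-1}(\infty)$ is a discrete set of Lebesgue measure zero where $\mu$ may be left unspecified, which is precisely the flexibility that allows the original Kisaka--Shishikura statement for entire maps to be lifted to the meromorphic setting as indicated in the footnote.
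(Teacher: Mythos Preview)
Your sketch is correct and follows the standard Sullivan--Shishikura recipe: build a $g$-invariant Beltrami coefficient by pulling back the standard structure along orbits, bound its essential supremum via hypotheses (i)--(iii), integrate with the measurable Riemann mapping theorem, and invoke Weyl's lemma. Note, however, that the paper does not actually prove this lemma; it quotes it from Kisaka--Shishikura \cite[Theorem~3.1]{KS08} and remarks in a footnote that the meromorphic extension is covered by Sullivan's Straightening Theorem as in \cite[Sections~5.2--5.3]{BF14}. Your argument is precisely the one underlying those references, so there is nothing to compare: you have written out what the paper takes as a black box.

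One small point worth tightening if you keep this as a standalone proof: the ``orbit-by-orbit'' description of $\mu$ is heuristically right but, as you yourself flag, measurability is not automatic when read that way. The cleaner route (and the one in \cite{KS08} and \cite{BF14}) is to define $\mu$ as the a.e.\ limit of the finite pull-backs $\mu_n := (g^n)^*\mu_0$ of the standard structure $\mu_0\equiv 0$; each $\mu_n$ is measurable since $g^n$ is quasiregular, the sequence is eventually constant at a.e.\ point whose orbit meets only finitely many $E_j$, and otherwise converges by the dilatation bound from (iii). This packaging gives measurability and the invariance $g^*\mu=\mu$ for free, without having to parse individual orbits.
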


We have our patient and our tools; let us begin the surgery. We refer to Figure \ref{fig:surgery} for a sketch of some of the sets and functions here and their relations to each other. First, we take the points $z_n := 4n\in U_n$, $n\geq N$, which form an orbit under $f$\textcolor{black}{; $N$ here is large enough that the annuli $A_n'$ described at the end of Subsection \ref{ssec:patient} exist for $n\geq N$}. We take some $r > 0$ such that, for $n\geq N$, the circle $C_n = \{z : |z_n - z| = r\}$ (shown in blue in Figure \ref{fig:surgery}) is surrounded by \textcolor{black}{$A_n'$}. Inside the discs $\{z : |z_n - z| \leq r\}$, we will remove $f$ and transplant appropriately translated versions of the following re-scaled Joukowski map:
\[ J_n(z) = \frac{\mu_n r^2}{\mu_n^2r^2 - 1}\left(\mu_nz + \frac{1}{\mu_nz}\right), \]
where the $\mu_n > 1/r$ are parameters that will give us great control over the dilatation of the interpolating maps. The strange scaling constant here requires explaining; its role is to guarantee that $J_n(C_n)$ surrounds $C_{n+1}$, \textcolor{black}{which will help us enforce condition 2}. It can be calculated as follows: as explained in \cite[pp. 94--95]{Ahl79}, the function $z\mapsto z + 1/z$ maps a circle of radius $\rho > 1$ injectively onto an ellipse with major semi-axis $\rho + 1/\rho$ and minor semi-axis $\rho - 1/\rho$. Denoting the (as yet unknown) scaling constant by $\lambda$, the condition ``$J_n(C_n)$ surrounds $C_{n+1}$'' then becomes
\[ \lambda\left(\mu_nr - \frac{1}{\mu_n r}\right) = r, \]
and solving for $\lambda$ yields precisely
\[ \lambda = \frac{\mu_nr^2}{\mu_n^2r^2 - 1}. \]

Next, since $f|_{U_n}$ converges locally uniformly to the translation $z\mapsto z + 4$ (in the sense outlined in property (D)), we can take some $r' > r$ with $C_n' := \{z : |z_n - z| = r'\}$ (red in Figure \ref{fig:surgery}) also surrounded by $A_n'$, and such that $|f(C_n') - z_{n+1}| > r + \delta$ for all \textcolor{black}{sufficiently} large $n$. Now, if $n$ is (again) large enough, the only pre-image of $z_{n+1}$ under $f$ surrounded by $C_n'$ is $z_n$ itself, and hence $\ind(f\circ C_n', z_{n+1}) = 1$ (the notation $\ind(\gamma, \alpha)$ denotes the winding number of the curve $\gamma$ around the point $\alpha\in\Cx$). We are ready to start interpolating the maps $T_{n+1}\circ J_n\circ T_n^{-1}$ on $C_n$ and $f$ on $C_n'$. For the sake of convenience, we pass to the unit disc, interpolating instead the functions $J_n$ and $T_{n+1}^{-1}\circ f\circ T_n$ on $\{z\in\Cx : |z| = r\}$ and $\{z\in\Cx : |z| = r'\}$ (respectively).

\begin{figure}[!h]
    \centering
    \begin{tikzpicture}
        \node[anchor=south west,inner sep=0] at (0,0)
        {\includegraphics[width=0.9\columnwidth]{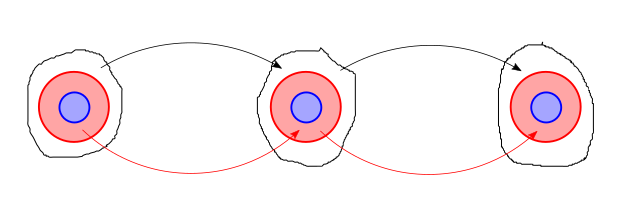}};

        \node at (3.8,3.2) {$f$};
        \node at (8.1,3.2) {$f$};

        \node[red] at (3.7,0.3) {$\phi_n$};
        \node[red] at (8,0.3) {$\phi_{n+1}$};

        \node[red] at (2.5,1.7) {$A_n$};
        \node at (0.2,1.7) {$U_n$};
    \end{tikzpicture}
    \caption{In the blue discs bounded by $C_n$, the original map $f$ was replaced by appropriately translated versions of $\gamma_n$. To connect this with $f$, we interpolate on the red annuli \textcolor{black}{$A_n$} using the quasiconformal maps $\phi_n$.}
    \label{fig:surgery}
\end{figure}

Needless to say, we're applying Lemma \ref{lem:interpolate} with $k = 1$; first, we'd like to draw attention to condition (\ref{eq:cond2}) in the case $j = 1$, i.e., for the function $J_n$. The left-hand quantity takes the form
\[ \left|\frac{2}{\mu_n^2r^2e^{2i\theta} + 1}\right|, \]
which has the upper bound (achieved for $\theta = \pi/2$)
\[ \frac{2}{\mu_n^2r^2 - 1}. \]
Clearly, taking $\mu_n\to +\infty$ gives us excellent control over how small this term is; it is through this ``trick'' that we will control the dilatation introduced by the Joukowski map.

For the case $j = 2$, we have a function of the form $f(z) = b_{n+1}(z) + \epsilon_n(z)$, where $\epsilon_n\to 0$ uniformly as $n\to +\infty$ and $b_n$ are the Blaschke products described in Subsection \ref{ssec:patient}. The exact form of \textcolor{black}{the left-hand side of} condition (\ref{eq:cond2}) is
\begin{equation} \label{eq:lh1}
    \left|z\frac{d}{dz}\left(\log\frac{b_{n+1}(z) +\epsilon_n(z)}{z}\right)\right|,
\end{equation}
\textcolor{black}{and by controlling how fast $a_n\nearrow 1$ and applying Cauchy's integral formula to $\epsilon_n$, we can ensure that $\epsilon_n$ and its derivative tend to zero as fast as necessary to \textcolor{black}{control the size of the constants in Lemma \ref{lem:interpolate}}}. Thus, up to a small error term \textcolor{black}{tending to zero} arbitrarily fast, \textcolor{black}{(\ref{eq:lh1})} takes the form
\[ \left|z\frac{b_{n+1}'(z)}{b_{n+1}(z)} - 1\right| = \left|\frac{(1 +a_{n+1})z +a_{n+1}^2z^2 +a_{n+1}}{(1 +a_{n+1}^2)z + a_{n+1}z^2 + a_{n+1}} - 1\right|, \]
which again \textcolor{black}{tends to zero} arbitrarily fast by choosing an appropriate sequence $a_n\nearrow 1$. Finally, \textcolor{black}{the left-hand side of} condition (\ref{eq:cond1}) takes the form
\[ \left|\log\left(\frac{b_{n+1}(r'e^{i\theta}) + \epsilon_n(r'e^{i\theta})}{r'e^{i\theta}}\frac{re^{i\theta}}{J_n(re^{i\theta})}\right)\right| \]
for $\theta\in[0, 2\pi)$, and we can use the triangle inequality to \textcolor{black}{bound it by}
\[ \left|\log\frac{b_{n+1}(r'e^{i\theta}) + \epsilon_n(r'e^{i\theta})}{r'e^{i\theta}}\right| +\left|\log\frac{re^{i\theta}}{J_n(re^{i\theta})}\right|. \]
Ignoring the error term again, this becomes
\begin{equation} \label{eq:lh2}
    \left|\log\frac{r'e^{i\theta} + a_{n+1}}{1 + a_{n+1}r'e^{i\theta}}\right| + \left|\log\frac{re^{i\theta}}{\frac{\mu_n^2r^3e^{i\theta}}{\mu_n^2r^2 - 1} - \frac{r}{e^{i\theta}(\mu_n^2r^2 - 1)}}\right|.
\end{equation}
Thus, we see that, by making appropriate, independent choices of $a_n\nearrow 1$ and $\mu_n\to +\infty$, we can make \textcolor{black}{(\ref{eq:lh2})} go to zero arbitrarily fast \textcolor{black}{as $n\to +\infty$}.

It follows that we can arrange conditions (\ref{eq:cond1}) and (\ref{eq:cond2}) to hold in each annulus $A_n := \{z\in \Cx : r < |z - z_n| < r'\}$ with constants $\delta_{n,0}$ and $\delta_{n,1}$ that are as small as we need them to be. Thus, by invoking Lemma \ref{lem:interpolate}, we obtain a sequence of quasiconformal maps $\phi_n$ that interpolate between $J_n$ and $f$ on the annuli $A_n$ \textcolor{black}{with dilatation as close to one as we please, say $K_n < 1 + 1/n^2$, by (\ref{eq:KSc}) and (\ref{eq:KSk}). It follows that the $K_n$ can be made to satisfy
\[ K_\infty := \prod_{n=1}^{+\infty} K_n < +\infty. \]}

\textcolor{black}{We define now the map
\[ g_0(z) := \begin{cases}
            J_n(z) & z\in \mathrm{int}(C_n), n\geq N, \\
            \phi_n(z) & z\in \overline{A_n}, n\geq N, \\
            f(z) & \text{elsewhere}, \end{cases} \]
and claim that it satisfies the hypotheses of Lemma \ref{lem:straighten}. Indeed, for any $z\in A_n$, the fact that $J_n(C_n)$ surrounds $C_{n+1}$ while $C_{n+1}'$ surrounds $f(C_n')$ imply that $g_0(z)\in A_{n+1}$, and so the $g_0$-orbit of every $z\in \Cx$ meets each $A_n$ at most once, guaranteeing hypothesis (i). The hypotheses (ii) to (iv) are also satisfied by the construction of the $J_n$ and our choices of $a_n$ and $\mu_n$, and so we \textcolor{black}{can} apply Lemma \ref{lem:straighten} and obtain a $K_{\infty}$-quasiconformal map $\psi$ such that
\[ g(z) := \psi\circ g_0\circ\psi^{-1}(z) \]
is a transcendental meromorphic function.}

We claim that
\begin{enumerate}[(i)]
    \item the new map $g$ has a wandering domain $V$,
    \item $V$ is semi-contracting, and
    \item \textcolor{black}{$V$ is infinitely connected.}
\end{enumerate}

\textcolor{black}{The first claim will follow from the fact that $\psi$ conjugates $g_0$ to $g$, and that we left ``enough'' of $f$ intact in each $U_n$. More specifically, recall the round annulus $A$ from Subsection \ref{ssec:patient}; it satisfies $|B(z)| > c$ for $z\in A$. By Rouche's theorem, we find a topological annulus $A'\subset U_N$ (where, again, $N$ is large enough that the annuli $A_n'$ exist for $n\geq N$) such that, for $z\in A'$, $f^n(z)\in U_{n+N}$ does not intersect the discs $\{z\in \Cx : |z - 4(n + N)| < c'\}\subset U_{n+N}$, where $c'$ was also defined in Subsection \ref{ssec:patient}. In particular, for $z\in A'$, the $f$-orbit of $z$ is not affected by the surgery, and therefore is conjugated by $\psi$ to its $g$-orbit; it follows that $\psi(A')$ is contained in a Fatou component $V$ of $g$. Furthermore, since $g$ has a pole at $\psi(4n)$ for $n\geq N$, $V$ is at least doubly connected.}

Now, we show that $V$ is semi-contracting. We begin by showing that $V$ (and each $V_n$, the Fatou component of $g$ containing $g^n(V)$) is contained in $\psi(U_N)$ (respectively, $\psi(U_{n+N})$). This, in turn, will follow from the fact that we didn't modify $f$ outside of a small disc properly contained within each $U_n$. Indeed, if $w$ is a point on $\partial U_N$, it belongs to the Julia set of $f$, and is therefore approached by a sequence $(w_n)_{n\in\mathbb{N}}$ of repelling periodic points \textcolor{black}{of $f$ (see, for instance, \cite[Theorem 4]{Ber93})}. Since \textcolor{black}{the $w_n$ are periodic points in the Julia set, their orbits do not intersect the discs $\{z\in\Cx : |z - 4(n + N)| < c'\}\subset U_{n+N}$ for any $n\in\mathbb{N}$, and so} $g_0$ agrees with $f$ on \textcolor{black}{their $f$-orbits. It follows that} the conjugacy $\psi$ takes \textcolor{black}{these} $f$-orbits to corresponding $g$-periodic orbits $\psi(w_n)$, which \textcolor{black}{we can show to be repelling by the local topological dynamics as follows.}

\textcolor{black}{Take one of the $w_n$, of minimal period $k_n\geq 1$ (say), and apply Koenig's linearisation theorem \cite[Theorem 8.2]{Mil06} to find a neighbourhood $W$ of $w_n$ and a biholomorphic map $\phi:W\cup f^{k_n}(W)\to\phi(W\cup f^{k_n}(W))\subset\Cx$ such that $\phi$ conjugates $f^{k_n}|_W$ to multiplication by $\alpha_n := (f^{k_n})'(w_n)$, which satisfies $|\alpha_n| > 1$ since $w_n$ is repelling. This allows us to find (if necessary) a smaller neighbourhood $W'\subset W$ such that $f^m(W')\cap \{z\in\Cx : |z - 4(n + N)| < c'\} = \emptyset$ for all $n\in\mathbb{N}$ and $1\leq m \leq k_n$, so that $\psi\left(f^m(z)\right) = g^m\left(\psi(z)\right)$ for $z\in W'$ and $1\leq m\leq k_n$. From this, we may conclude that $\psi(w_n)$ is repelling for $g$: for all $z\in \psi(W')\setminus\{w_n\}$, there must exist $M\in\mathbb{N}$ such that $g^{Mk_n}(z)$ is not in $\psi(W')$ (see, for instance, \cite[p. 84]{Mil06}).} It follows that $\psi(w)$, being accumulated by repelling periodic points, belongs to $J(g)$\textcolor{black}{; a similar argument applies to each $V_n$, $n\in\mathbb{N}$}.

Thus, for any points $z, w\in V$, we have \textcolor{black}{by the Schwarz-Pick lemma and the fact that $V_n\subset\psi(U_{n+N})$ that}
\[ d_{V_n}\left(g^n(z), g^n(w)\right) \geq d_{\psi(U_{n+N})}\left(g^n(z), g^n(w)\right); \]
if $\psi$ were a conformal map, \textcolor{black}{our work would be done} -- but it is not, and it does not preserve the hyperbolic metric. Instead, for a domain $D\subset \Cx$ and points $z, w\in D$, let \textcolor{black}{us define}
\[ k_D(z, w) := \inf_\gamma \int_\gamma \frac{1}{d(s, \partial D)}\,|ds|, \]
where the infimum runs over every rectifiable arc $\gamma\subset D$ joining $z$ and $w$. This is the \textit{quasi-hyperbolic metric}; if $D$ is simply connected, standard estimates for the hyperbolic metric \cite[p. 13]{CG93} show that
\[ \frac{k_D(z, w)}{2} \leq d_D(z, w) \leq 2k_D(z, w). \]
Additionally, since $\psi$ is $(K_\infty)^{-1}$-Hölder continuous \textcolor{black}{\cite[p. 31]{BF14}}, we know (see \cite[Theorem 3]{GO79}) that there exists a constant $C > 0$ depending only on $K_\infty$ such that\textcolor{black}{
\[ k_{U_{n+N}}\left(z', w'\right) \leq C\cdot\max\left\{k_{\psi(U_{n+N})}\left(z, w\right), k_{\psi(U_{n+N})}\left(z, w\right)^{1/K_\infty}\right\}, \]
where $z' = \psi^{-1}(z)$ and $w' = \psi^{-1}(w)$ are points in $U_{n+N}$. For points $z'\in A'$, the construction of $g$ implies that
\[ \text{$\psi\left(f^n(z')\right) = g^n\left(\psi(z)\right)$ for all $n$,} \]
and so taking $z'$ and $w'$ in $A'$ yields
\[ k_{U_{n+N}}\left(f^n(z'), f^n(w')\right) \leq C\cdot\max\left\{k_{\psi(U_{n+N})}\left(g^n(z), g^n(w)\right), k_{\psi(U_{n+N})}\left(g^n(z), g^n(w)\right)^{1/K_\infty}\right\}. \]}
The left-hand side of this expression is bounded below by $d_{U_{n+N}}\left(f^n(z'), f^n(w')\right)/2$, which, since $U$ is semi-contracting, is in turn bounded below by $c(z', w')/2 > 0$. Also, since the exponent on the right-hand side is independent of $n$, we can get a uniform, positive lower bound $c'(z, w)$ on $k_{\psi(U_{n+N})}\left(g^n(z), g^n(w)\right)$ regardless of which term is the maximum. Combining everything, we see that
\[ \text{$\frac{c'(z, w)}{2} \leq d_{\psi(U_{n+N})}\left(g^n(z), g^n(w)\right) \leq d_{V_n}\left(g^n(z), g^n(w)\right)$ for $z$ and $w$ in $\psi(A')$,} \]
and thus $V$ contains a non-empty open subset that is semi-contracting relative to any $z_0$ in this subset -- we claim that $\left(d_{V_n}\left(g^n(z), g^n(w)\right)\right)_{n\in\mathbb{N}}$ is not a constant sequence, since each $g|_{V_n}$ ``inherits'' a critical point from $f$. Indeed, $f|_{U_n}$ has a single critical point $z_n^*$ for every $n$, and since $z_n^*$ approaches $\partial U_n$ as $n\to +\infty$ (in the sense outlined in property (D)), we have $z_n^*\in U_n\setminus\{z\in\Cx : |z - z_n| \leq r'\}$ for all sufficiently large $n$. Hence, the critical point is not affected by the surgery for sufficiently large $n$, and $\psi(z_n^*)\in V_n$ is a critical point of $g$ for all large $n$.

We have only shown that $V$ is semi-contracting on the topological annulus $\psi(A')$. For the remainder of $V$, we choose some $z_0$ in $\psi(A')$ and apply Theorem \ref{thm:mix}.

To prove that $V$ is infinitely connected, notice that (since $V_n\subset \psi(U_{n+N})$) all $V_n$ are bounded, and so $\deg g|_{V_n}$ is always finite. By applying \cite[Theorem 1.1]{Fer21}, we deduce that $V$ must be infinitely connected. This concludes the proof of Theorem~\ref{thm:ex}.

\end{document}